\newtheorem{theorem}{Theorem}[section]
\newtheorem{corollary}[theorem]{Corollary}
\newtheorem{lemma}[theorem]{Lemma}
\newtheorem{definition}[theorem]{Definition}
\numberwithin{equation}{section}
\begin{document}

\begin{frontmatter}

\title{On the Extreme Value Behavior of $\vartheta$-Expansions}

\author[upb,ismma]{Gabriela Ileana Sebe}
\ead{igsebe@yahoo.com}
\address[upb]{Faculty of Applied Sciences, 
National University of Science and Technology  
POLITEHNICA Bucharest, \\ Splaiul Independentei 313, 060042, Bucharest, Romania}
\address[ismma]{Gheorghe Mihoc-Caius Iacob Institute of Mathematical Statistics and Applied Mathematics of the Romanian Academy, Calea 13 Sept. 13, 050711 Bucharest, Romania}

\author[anmb]{Dan Lascu}
\ead{lascudan@gmail.com}
\address[anmb]{Romanian Naval Academy ``Mircea cel Batran", 1 Fulgerului, 900218 Constanta, Romania}

\author[um]{Bilel Selmi\corref{cor1}}
\ead{bilel.selmi@fsm.rnu.tn, bilel.selmi@isetgb.rnu.tn}
\address[um]{Analysis, Probability and Fractals Laboratory LR18ES17, Department of Mathematics, Faculty of Sciences of Monastir, University of Monastir, 5000 Monastir, Tunisia}
\cortext[cor1]{Corresponding author}

\begin{abstract}
The main objective of this paper is to develop extreme value theory for $\vartheta$-expansions. We establish the limit distribution of the maximum value in a $\vartheta$-continued fraction mixing stationary stochastic process, along with some related results. These findings are analogous to the theorems of J. Galambos and W. Philipp for regular continued fractions. Additionally, we emphasize that a Borel-Bernstein type theorem plays a crucial role.
\end{abstract}
\begin{keyword}
$\vartheta$-expansions, Borel-Bernstein type theorem, extreme value theory, Fr\'echet law, $\psi$-mixing.
\end{keyword}

\end{frontmatter}

\sloppy

\section{Introduction}

The study initiated by Bhattacharya and Goswami \cite{BG-2000} in the context of random number generations led to the concept of continued fraction expansion of a number in terms of an irrational $\vartheta \in (0,1)$.  
This novel expansion of positive real numbers, distinct from the regular continued fraction (RCF) expansion, is referred to as the \textit{$\vartheta$-expansion}.  
We note that the special case $\vartheta = 1$ corresponds to the RCF expansion. 

For a fixed $\vartheta \in (0, 1)$, Chakraborty and Rao \cite{CR-2003} introduced a generalization of the Gauss map, defined as $T_{\vartheta}: [0,\vartheta] \to [0,\vartheta]$:
\begin{equation}
T_{\vartheta}(x):=
\left\{
\begin{array}{ll}
{\displaystyle \frac{1}{x} - \vartheta \left \lfloor \frac{1}{x \vartheta} \right\rfloor}, &
{\displaystyle \text{if } x \in (0, \vartheta],}\\
\\
0, & \text{if } x=0.
\end{array}
\right. \label{1.1}
\end{equation}
Here, $\left\lfloor \cdot \right\rfloor$ denotes the integer part.  
Every $x \in \left(0, \vartheta \right)$ can then be expressed as a finite or infinite $\vartheta$-expansion:
\begin{equation}
x = \frac{1}{\displaystyle b_1\vartheta
+\frac{1}{\displaystyle b_2\vartheta
+ \frac{1}{\displaystyle b_3\vartheta + \ddots} }} =: [b_1 \vartheta, b_2 \vartheta, b_3 \vartheta, \ldots]. \label{1.2}
\end{equation}
The sequence $\{b_n\}$ is determined by
\begin{equation*}
b_1=b_1(x) := \left\{\begin{array}{ll}
\left\lfloor \displaystyle \frac{1}{x \vartheta}\right\rfloor,  & \text{if }  x \neq 0, \\
\\
\infty, & \text{if }  x = 0,
\end{array} \right.
\end{equation*}
and for $n \geq 2$,
\begin{equation*}
b_n=b_n(x) := b_1\left(T_{\vartheta}^{n-1}(x)\right), \quad n \in \mathbb{N}_+ := \left\{1, 2, 3, \ldots\right\},
\end{equation*}
with the convention that $T_{\vartheta}^0 (x) = x$.  
The positive integers $b_n \in \mathbb{N}_+$ are called the \textit{digits} or \textit{partial quotients} of $x$.  

Let $\mathcal{B}_{[0,\vartheta]}$ be the $\sigma$-algebra of all Borel subsets of $[0,\vartheta]$. It is evident that the digits $b_n$, $n \in \mathbb{N}_+$, are random variables defined almost surely on $\left( [0, \vartheta], \mathcal{B}_{[0,\vartheta]}\right)$ with respect to any probability measure on $\mathcal{B}_{[0,\vartheta]}$ that assigns probability $0$ to the set of rationals in $[0, \vartheta]$.  
An example of such a measure is the Lebesgue measure $\lambda_{\vartheta}$ on $[0, \vartheta]$.
It was shown in \cite{CR-2003,SL-2014} that this expansion exhibits many of the standard properties of regular continued fractions (RCFs).  
A natural question arises: does the dynamical system defined by the transformation $T_{\vartheta}$ admit an absolutely continuous invariant probability measure, similar to the Gauss measure in the case $\vartheta = 1$?  Chakraborty and Rao \cite{CR-2003} identified that for certain values of $\vartheta$ (for instance, when $\vartheta^2 = \frac{1}{m}$, where $m \in \mathbb{N}_+$), the invariant measure for the transformation $T_{\vartheta}$ is given by  
\begin{equation}\label{1.3}
\mathrm{d}\gamma_{\vartheta} := \frac{1}{\log \left(1+\vartheta^{2}\right)}
\frac{\vartheta \,\mathrm{d}x}{1 + \vartheta x}.
\end{equation}

Moreover, when $\vartheta^2 = \frac{1}{m}$, $m \in \mathbb{N}_+$, a number $x \in (0, \vartheta)$ has the $\vartheta$-expansion $[b_1 \vartheta, b_2 \vartheta, b_3 \vartheta, \ldots]$ if and only if the following conditions hold:  
\begin{enumerate}
\item [(i)] $b_n \geq m$ for all $n \in \mathbb{N}_+$.
\item [(ii)] If $x$ has a finite expansion, i.e., $x = [b_1 \vartheta, b_2 \vartheta, \ldots,  b_n \vartheta]$, then $b_n \geq m+1$.
\end{enumerate}
It was further established in \cite{CR-2003} that the dynamical system  
$([0,\vartheta], T_{\vartheta})$ is ergodic and that the measure $\gamma_{\vartheta}$ remains invariant under $T_{\vartheta}$, meaning  
\[
\gamma_{\vartheta} (A) = \gamma_{\vartheta} (T^{-1}_{\vartheta}(A))
\]
for all $A \in {\mathcal{B}}_{[0, \vartheta]}$.  
Consequently, the sequence $(b_n)_{n \in \mathbb{N}_+}$ forms a strictly stationary process on $([0,\vartheta],{\mathcal B}_{[0,\vartheta]},\gamma_{\vartheta})$.  
For further results on $\vartheta$-expansions, see \cite{CD-2004,Sebe-2017,SL-2014, SL-2019, SL-2025} and the references therein.

Every irrational number $x \in (0, \vartheta) \setminus \mathbb{Q}=: \Omega$ possesses an infinite $\vartheta$-expansion. It is important to note that for all $n \in \mathbb{N}_+$, we have $b_n(x) \geq m$, and the transformation satisfies  
\[
T^n_{\vartheta}([b_1 \vartheta, b_2 \vartheta, \ldots]) = [b_{n+1} \vartheta, b_{n+2} \vartheta, \ldots].
\]
For all $n \in \mathbb{N}_+$, the finite truncation of (\ref{1.2}) is given by  
\[
\frac{p_n(x)}{q_n(x)} = [b_1(x) \vartheta, b_2(x) \vartheta, \ldots, b_n(x) \vartheta],
\]
which is referred to as the $n$-\textit{th convergent} of the $\vartheta$-expansion of $x$.  
For every infinite $\vartheta$-expansion $[b_1 \vartheta, b_2 \vartheta, \ldots]$, the sequences $\{p_n\}_{n \geq -1}$ and $\{q_n\}_{n \geq -1}$ satisfy the recursive relations  
\begin{eqnarray}
p_n(x) &=& b_n(x) \vartheta p_{n-1}(x) + p_{n-2}(x), \quad \label{1.4} \\
q_n(x) &=& b_n(x) \vartheta q_{n-1}(x) + q_{n-2}(x), \quad \label{1.5}
\end{eqnarray}
with initial conditions $p_{-1}(x) := 1$, $p_0(x) := 0$, $q_{-1}(x) := 0$, and $q_{0}(x) := 1$.  
By induction, it follows that  
\begin{equation}
p_{n-1}(x)q_{n}(x) - p_{n}(x)q_{n-1}(x) = (-1)^{n}, \quad n \in \mathbb{N}. \label{1.6}
\end{equation}
Using (\ref{1.4}) and (\ref{1.5}), we can derive the relation  
\begin{equation}
x = \frac{p_n(x) + T^n_{\vartheta}(x)p_{n-1}(x)}
{q_n(x) + T^n_{\vartheta}(x)q_{n-1}(x)}, \quad n \geq 1. \label{1.7}
\end{equation}
From (\ref{1.6}) and (\ref{1.7}), we obtain  
\begin{equation}
\left| x - \frac{p_n(x)}{q_n(x)} \right|  = \frac{1}{q_n(x)\left(\left(T^n_{\vartheta}(x)\right)^{-1} q_n(x)+q_{n-1}(x) \right)}, \quad n \geq 1. \label{1.8}
\end{equation}
Since $b_{n+1}(x)\vartheta \leq \left(T^n_{\vartheta}(x)\right)^{-1} \leq (b_{n+1}(x)+1)\vartheta$, substituting (\ref{1.4}) and (\ref{1.5}) into (\ref{1.8}) yields  
\begin{equation}
\frac{1}{q_n(x)(q_{n+1}(x)+\vartheta q_{n}(x))} \leq  \left| x - \frac{p_n(x)}{q_n(x)} \right| \leq \frac{1}{q_n(x)q_{n+1}(x)}, \quad n \geq 1. \label{1.9}
\end{equation}
From (\ref{1.5}), we conclude that $q_n(x) \geq \vartheta$ for all $n \in \mathbb{N}_+$. Moreover, applying induction to (\ref{1.5}) gives  
\begin{equation}
q_n(x) \geq  \left\lfloor \frac{n}{2} \right \rfloor \vartheta^2. \label{1.10} 
\end{equation}
Finally, combining (\ref{1.9}) and (\ref{1.10}), we deduce that  
\[
[b_1(x) \vartheta, b_2(x) \vartheta, \ldots, b_n(x) \vartheta] \to x \quad \text{as} \quad n \to \infty.
\]
Relation (\ref{1.9}) implies that the accuracy of this approximation depends on the growth rate of the partial quotients.  

In the case of regular continued fractions (RCFs), Borel \cite{Borel} and Bernstein \cite{Bern} established a fundamental result known as the \textit{Borel-Bernstein theorem} or the \textit{``$0$-$1$'' law}, which describes the growth rate of partial quotients in terms of Lebesgue measure.  
Our first result, Theorem \ref{th.B-B}, presents an analogous version of the Borel-Bernstein theorem for $\vartheta$-expansions.  
Furthermore, in Section 5, we demonstrate that this type of theorem plays a crucial role in the study of $\vartheta$-expansions.  
Sections 4 and 5 contain new results concerning extreme value theory for $\vartheta$-expansions. These results are novel in the sense that they do not appear to have been previously stated elsewhere.  
Extreme value theory for RCF digits first emerged in the 1970s. The foundational results of Galambos \cite{G-1972, G-1973, G-1974} regarding the maximal RCF digit were later refined by Philipp \cite{Ph-1976}, providing a complete resolution to a conjecture posed by Erd\H{o}s.   In Section 4, we establish a Fr\'echet law concerning the partial maxima of the growth rate of the digit sequence.  
Theorems \ref{Th.4.4} and \ref{Th.4.5} extend the prior work of Galambos \cite{G-1972} and Philipp \cite{Ph-1976} on the asymptotic behavior of the largest digit in RCF-expansions.  To obtain these results, we rely on the $\vartheta$-continued fraction mixing property, along with a specific condition on the rate of convergence of mixing.  
In Section 5, we present iterated logarithm results (Theorem \ref{th.5.2} and Corollary \ref{cor.5.4}) concerning the largest digit in $\vartheta$-expansions.  

\section{Preliminaries} 

Let us fix $\vartheta^2 = 1/m$, where $m \in \mathbb{N}_+$. Define  
\[
\mathbb{N}_m := \{m, m+1, \ldots\},
\]
so that the partial quotients $b_n$, for $n \in \mathbb{N}_+$, take positive integer values in $\mathbb{N}_m$.  

We now introduce a natural partition of the interval $[0, \vartheta]$ that corresponds to $\vartheta$-expansions. This partition is generated by cylinders of rank $n$. For any $n \in \mathbb{N}_+$ and  
$i^{(n)}=(i_1, \ldots, i_n) \in \mathbb{N}_m^n$, we define the {\it $n$-th cylinder} of the $\vartheta$-expansion by  
\begin{equation*}
C \left(i^{(n)}\right) = \{x \in \Omega:  b_k(x) = i_k \text{ for } k=1, \ldots, n \},
\end{equation*}
where $C \left(i^{(0)}\right) = [0, \vartheta]$.  
For any $i \in \mathbb{N}_m$, we have  
\begin{equation} \label{2.01}
C(i) = \left\{x \in \Omega: b_1(x) = i \right\} = \left( \frac{1}{(i+1)\vartheta}, \frac{1}{i \vartheta} \right).
\end{equation}
For $n \in \mathbb{N}_+$ and $i_n \in \mathbb{N}_m$, we use the notation  
\[
C(b_1, \ldots, b_n) = C \left(i^{(n)}\right).
\]  

Next, we recall some known results for later use.  
From the definition of $T_{\vartheta}$ and (\ref{1.7}), for any $n \in \mathbb{N}_+$ and $(b_1, \ldots, b_n) \in \mathbb{N}_m^n$, we obtain  
\begin{equation}
C(b_1, \ldots, b_n) = \left\{
\begin{array}{ll}
	\left[ \displaystyle \frac{p_n}{q_n}, \frac{p_n+ \vartheta p_{n-1}}{q_n+ \vartheta q_{n-1}} \right)  & \text{if $n$ is even}, \\
	\\
	\left(\displaystyle \frac{p_n+ \vartheta p_{n-1}}{q_n+ \vartheta q_{n-1}}, \frac{p_n}{q_n} \right] & \text{if $n$ is odd}.
\end{array}
\right. \label{2.1}
\end{equation}

Using (\ref{1.6}), we obtain  
\begin{eqnarray}
\lambda_{\vartheta}\left(C\left(b_1, \ldots, b_n\right)\right) &=& \frac{1}{\vartheta} \left| \frac{p_n}{q_n} - \frac{p_n+ \vartheta p_{n-1}}{q_n+\vartheta q_{n-1}} \right| \notag \\
&=& \frac{1}{q_n (q_n + \vartheta q_{n-1})} = \frac{1}{q^2_n (1 + \vartheta s_{n})}, \label{2.2}
\end{eqnarray}
where $s_n = \frac{q_{n-1}}{q_n}$ for $n \in \mathbb{N}_+$, with $s_0=0$.  
Since $s_n \in [0, \vartheta]$, it follows from (\ref{2.2}) that  
\begin{equation}
\frac{1}{2q^2_n} \leq \frac{1}{(1+\vartheta^2) q^2_n} \leq \lambda_{\vartheta}\left(C\left(b_1, \ldots, b_n\right)\right) \leq \frac{1}{q^2_n}. \label{2.3}
\end{equation}

An interesting problem is to compute the approximate proportion of the $n$-th level cylinder set $C\left(b_1, \ldots, b_n\right)$ that is occupied by each of the $(n+1)$-th level cylinder sets $C\left(b_1, \ldots, b_n, k\right)$:  
The endpoints of the interval $C\left(b_1, \ldots, b_n, k\right)$ are given by  
\[
\frac{p_{n+1}}{q_{n+1}}, \quad \frac{p_{n+1}+ \vartheta p_n}{q_{n+1}+\vartheta q_n},
\]
where  
\[
p_{n+1} = k \vartheta p_n + p_{n-1}, \quad q_{n+1} = k \vartheta q_n + q_{n-1}.
\]
Thus, we have  
\begin{equation*}
\frac{p_{n+1}}{q_{n+1}} = \frac{k \vartheta p_n + p_{n-1}}{k \vartheta q_n + q_{n-1}}, \quad
\frac{p_{n+1}+\vartheta p_n}{q_{n+1}+\vartheta q_n} = \frac{(k+1) \vartheta p_n + p_{n-1}}{(k+1) \vartheta q_n + q_{n-1}}.
\end{equation*}

A direct computation yields  
\begin{equation*}
\lambda_{\vartheta}\left(C\left(b_1, \ldots, b_n, k\right)\right) = \frac{1}{(k\vartheta q_n + q_{n-1})((k+1)\vartheta q_n + q_{n-1})} =  \frac{1}{k^2q^2_n\left(\vartheta + \frac{s_n}{k}\right)\left( \left( 1+\frac{1}{k} \right) \vartheta +\frac{s_n}{k} \right) }. 
\end{equation*}
From (\ref{2.2}), it follows that  
\begin{eqnarray*}
\frac{\lambda_{\vartheta}\left(C\left(b_1, \ldots, b_n, k\right)\right)}{\lambda_{\vartheta}\left(C\left(b_1, \ldots, b_n\right)\right)} 
&=& 
\frac{q^2_n(1+\vartheta s_n)}{k^2q^2_n\left(\vartheta + \frac{s_n}{k}\right)\left( \left( 1+\frac{1}{k} \right) \vartheta +\frac{s_n}{k} \right)} \\
&=& \frac{1+\vartheta s_n}{k^2 \left(\vartheta + \frac{s_n}{k}\right)\left( \left( 1+\frac{1}{k} \right) \vartheta +\frac{s_n}{k} \right)}. 
\end{eqnarray*}
Since  
\[
\vartheta^2 < \left(\vartheta + \frac{s_n}{k}\right)\left( \left( 1+\frac{1}{k} \right) \vartheta +\frac{s_n}{k} \right) 
< \vartheta^2 \left( 1+\frac{1}{k} \right) \left( 1+\frac{2}{k} \right) < 6 \vartheta^2 < 6,  
\]
for $k \geq m$, we obtain  
\begin{equation}
\frac{1}{6k^2} < \frac{\lambda_{\vartheta}\left(C\left(b_1, \ldots, b_n, k\right)\right)}{\lambda_{\vartheta}\left(C\left(b_1, \ldots, b_n\right)\right)} < \frac{m+1}{k^2}.  \label{2.4}
\end{equation}

Next, we state some lemmas for later use.
\begin{lemma} \label{lema2.1}
Let $k \geq m$, then 
\[
\frac{1}{6k^2} < \lambda_{\vartheta}\left( \bigcup_{b_1,\ldots,b_n \geq m} C\left(b_1, \ldots, b_n, k\right)\right) < \frac{m+1}{k^2}. 
\]
\end{lemma}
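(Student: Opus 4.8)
The plan is to reduce the statement to the per-cylinder ratio estimate already recorded in (\ref{2.4}) and then to sum over the first $n$ digits. The starting point is the observation that the family $\{C(a_1,\ldots,a_n): (a_1,\ldots,a_n) \in \mathbb{N}_m^n\}$ is a partition of $\Omega$ (up to the $\lambda_\theta$-null set of rationals), and that the refined sets $C(a_1,\ldots,a_n,k)$ appearing in the union are pairwise disjoint for distinct blocks $(a_1,\ldots,a_n)$, being sub-cylinders distinguished by their first $n$ digits. Hence, by countable additivity, the $\lambda_\theta$-measure of the union equals $\sum_{a_1,\ldots,a_n\ge m}\lambda_\theta(C(a_1,\ldots,a_n,k))$, and the whole problem becomes one of bounding this series.

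First I would rewrite (\ref{2.4}) in the additive form
\[
\frac{1}{6k^2}\,\lambda_\theta\big(C(a_1,\ldots,a_n)\big) < \lambda_\theta\big(C(a_1,\ldots,a_n,k)\big) < \frac{m+1}{k^2}\,\lambda_\theta\big(C(a_1,\ldots,a_n)\big),
\]
valid for every fixed $(a_1,\ldots,a_n)\in\mathbb{N}_m^n$ precisely because $k\ge m$, and crucially with constants that do not depend on the block $(a_1,\ldots,a_n)$. Summing these inequalities over all $(a_1,\ldots,a_n)\in\mathbb{N}_m^n$, the middle terms assemble, by the disjointness noted above, into $\lambda_\theta\big(\bigcup_{a_1,\ldots,a_n\ge m} C(a_1,\ldots,a_n,k)\big)$, while each outer sum factors as the common constant times $\sum_{a_1,\ldots,a_n\ge m}\lambda_\theta(C(a_1,\ldots,a_n))$.

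The one fact I must pin down is the value of this last sum. Since the rank-$n$ cylinders partition $\Omega$ and $\lambda_\theta$ is the normalized Lebesgue measure on $[0,\theta]$ (consistent with the factor $1/\theta$ in (\ref{2.2}), so that $\lambda_\theta(\Omega)=\lambda_\theta([0,\theta])=1$; one can also verify this directly from (\ref{2.01}) via $\sum_{i\ge m} 1/(i(i+1)) = 1/m$ together with $\theta^2 = 1/m$), we obtain $\sum_{a_1,\ldots,a_n\ge m}\lambda_\theta(C(a_1,\ldots,a_n)) = 1$. Substituting this into the two factored bounds yields exactly $\frac{1}{6k^2}$ on the left and $\frac{m+1}{k^2}$ on the right. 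Strictness is inherited from (\ref{2.4}): each difference $\lambda_\theta(C(a_1,\ldots,a_n,k)) - \frac{1}{6k^2}\lambda_\theta(C(a_1,\ldots,a_n))$ is strictly positive, and a convergent sum with at least one strictly positive term is strictly positive, so no strictness is lost in passing to the infinite sum; the upper bound is handled identically.

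I do not expect a genuine obstacle here: the analytic content is entirely contained in (\ref{2.4}), and what remains is the bookkeeping of a disjoint decomposition. The only two points that demand a little care are the verification that the total mass of the rank-$n$ cylinders is $1$, i.e.\ that $\lambda_\theta$ is normalized, and the preservation of the strict inequalities under the infinite summation.
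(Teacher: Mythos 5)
Your proposal is correct and takes essentially the same route as the paper: the paper's proof is exactly the observation that summing (\ref{2.4}) over all blocks $(a_1,\ldots,a_n)\in\mathbb{N}_m^n$, together with $\sum_{a_1,\ldots,a_n \geq m} \lambda_{\theta}\left( C\left(a_1, \ldots, a_n \right)\right)=1$, gives the claim. Your additional care about disjointness, normalization of $\lambda_{\theta}$, and preservation of strict inequalities under the infinite sum simply makes explicit what the paper leaves implicit.
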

\begin{proof}
Using (\ref{2.4}) and the fact that  
\[
\sum_{b_1,\ldots,b_n \geq m} \lambda_{\vartheta}\left( C\left(b_1, \ldots, b_n \right)\right) = 1,
\]  
the proof is complete.
\end{proof}

We present the following widely known and highly useful result.
\begin{lemma}[Borel-Cantelli] \label{lema2.2}
Let $(X,{\cal X}, \mu)$ be a measurable space. Let $\{C_n\}_{n\geq 1}$ be a sequence of ${\cal X}$-measurable sets and define the $\limsup$ set
\[
C_{\infty} = \limsup_{n \to \infty} C_n = \bigcap_{n\geq 1} \bigcup_{m\geq n} C_m = \{x \in X: x \in C_n \mbox{ for infinitely many } n \in \mathbb{N}_+ \}. 
\]
Then, if $\displaystyle \sum_{n \geq 1} \mu(C_n) < \infty$, we have that $\mu(C_{\infty})=0$. 
\end{lemma}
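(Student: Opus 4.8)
The plan is to run the classical first Borel--Cantelli argument, which rests entirely on the nested structure of the $\limsup$ set together with two elementary properties of any measure $\mu$: monotonicity and countable subadditivity. No deep input is needed.

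First I would fix notation for the tail unions. For each $n \geq 1$ set
\[
B_n := \bigcup_{m \geq n} C_m,
\]
so that, by definition of the $\limsup$, $C_\infty = \bigcap_{n \geq 1} B_n$. Since each $C_m$ lies in $\mathcal{X}$ and $\sigma$-algebras are closed under countable unions and intersections, both $B_n$ and $C_\infty$ are $\mathcal{X}$-measurable, so $\mu(C_\infty)$ is well defined. The sequence $(B_n)_{n \geq 1}$ is decreasing, $B_1 \supseteq B_2 \supseteq \cdots$, and in particular $C_\infty \subseteq B_n$ for every $n$.

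Next I would estimate $\mu(C_\infty)$. By monotonicity of $\mu$ and the inclusion $C_\infty \subseteq B_n$ we get $\mu(C_\infty) \leq \mu(B_n)$ for every $n$, and by countable subadditivity
\[
\mu(B_n) \leq \sum_{m \geq n} \mu(C_m).
\]
Combining the two inequalities yields $\mu(C_\infty) \leq \sum_{m \geq n} \mu(C_m)$ for all $n \geq 1$. Because the series $\sum_{n \geq 1} \mu(C_n)$ converges by hypothesis, its tail satisfies $\sum_{m \geq n} \mu(C_m) \to 0$ as $n \to \infty$. Letting $n \to \infty$ in the last inequality forces $\mu(C_\infty) \leq 0$, and since $\mu$ is nonnegative we conclude $\mu(C_\infty) = 0$.

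There is no genuine obstacle here: the result is one of the most elementary facts in measure theory, and the argument is valid for an arbitrary (not necessarily finite) measure $\mu$ since only subadditivity and monotonicity are invoked. The only point worth a moment's care is recording measurability of $C_\infty$, so that the symbol $\mu(C_\infty)$ is meaningful before the estimates are carried out; everything else is the direct chain \emph{monotonicity} $\to$ \emph{subadditivity} $\to$ \emph{vanishing of the tail of a convergent series}.
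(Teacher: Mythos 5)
Your proof is correct and is the canonical first Borel--Cantelli argument: bound $\mu(C_\infty)$ by $\mu\left(\bigcup_{m\geq n} C_m\right)$ via monotonicity, then by the tail $\sum_{m \geq n}\mu(C_m)$ via countable subadditivity, and let $n \to \infty$. Note that the paper itself gives no proof of this lemma --- it is merely recalled as a ``well-known and extremely useful result'' --- so there is nothing to compare against; your argument supplies exactly the standard justification the authors take for granted, and your observations that it requires no finiteness of $\mu$ and that measurability of $C_\infty$ should be recorded are both accurate.
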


\section{A Borel-Bernstein-type theorem}
Our primary result is stated in the following theorem:

\begin{theorem} [Borel-Bernstein-type theorem] \label{th.B-B}
Let $\varphi : \mathbb{N}_+ \to (0, +\infty)$ be a function, and define
\[
A_{\varphi} = \{x \in \Omega: b_n(x) > \varphi(n) \text{ for infinitely many } n \in \mathbb{N}_+ \}.
\]
Then, we have
\[
\lambda_{\vartheta} (A_{\varphi}) =
\begin{cases}
0, & \text{if } \sum_{n \geq 1} \frac{1}{\varphi(n)} < \infty, \\
1, & \text{if } \sum_{n \geq 1} \frac{1}{\varphi(n)} = \infty.
\end{cases}
\]
\end{theorem}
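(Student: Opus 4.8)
The plan is to write $A_\varphi = \limsup_{n\to\infty} E_n$, where $E_n := \{x \in \Omega : a_n(x) > \varphi(n)\}$, and to treat the two cases separately. In the convergence case I will show $\lambda_\theta(A_\varphi)=0$ directly from Borel--Cantelli (Lemma \ref{lema2.2}), using the upper estimate of Lemma \ref{lema2.1}. In the divergence case I will instead show that the complement $A_\varphi^c = \bigcup_{N\ge 1}\bigcap_{n\ge N} E_n^c$ is $\lambda_\theta$-null, which forces $\lambda_\theta(A_\varphi)=1$; this half is driven by the quasi-independence of the digits encoded in the two-sided bound (\ref{2.4}).

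For the convergence case, first I would estimate $\lambda_\theta(E_n)$. Since $\{a_n = k\} = \bigcup_{a_1,\dots,a_{n-1}} C(a_1,\dots,a_{n-1},k)$, Lemma \ref{lema2.1} gives $\lambda_\theta(\{a_n=k\}) < (m+1)/k^2$ for every $k\ge m$. Summing over $k > \varphi(n)$ and comparing $\sum_{k>\varphi(n)}k^{-2}$ with $\int_{\varphi(n)}^\infty x^{-2}\,\mathrm{d}x$ yields $\lambda_\theta(E_n) < C/\varphi(n)$ for a constant $C=C(m)$, valid whenever $\varphi(n)\ge m$. When $\sum_n 1/\varphi(n)<\infty$ only finitely many $n$ violate $\varphi(n)\ge m$, and these are harmless, so $\sum_n \lambda_\theta(E_n) \le C\sum_n 1/\varphi(n) < \infty$; Lemma \ref{lema2.2} then gives $\lambda_\theta(A_\varphi)=\lambda_\theta(\limsup_n E_n)=0$.

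The divergence case is the heart of the argument. If $\varphi(n) < m$ for infinitely many $n$, then $a_n > \varphi(n)$ holds automatically (as $a_n \ge m$) for those $n$, whence $A_\varphi = \Omega$ trivially; so I may assume $\varphi(n)\ge m$ for all large $n$. The key step is a conditional estimate: for any cylinder $C=C(a_1,\dots,a_{n-1})$, summing the \emph{lower} bound in (\ref{2.4}) over $k > \varphi(n)$ and again comparing with an integral gives $\lambda_\theta(\{a_n>\varphi(n)\}\cap C) > (c/\varphi(n))\,\lambda_\theta(C)$ with $c=1/12$, equivalently $\lambda_\theta(\{a_n\le\varphi(n)\}\cap C) < (1-c/\varphi(n))\,\lambda_\theta(C)$. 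Writing $G_N^{M-1} := \bigcap_{N\le n\le M-1}\{a_n\le\varphi(n)\}$ as a disjoint union of rank-$(M-1)$ cylinders and applying this bound with $n=M$ to each cylinder before summing gives the one-step inequality $\lambda_\theta(G_N^M) < (1-c/\varphi(M))\,\lambda_\theta(G_N^{M-1})$, where $G_N^M := G_N^{M-1}\cap\{a_M\le\varphi(M)\}$; iterating down to $N$ yields $\lambda_\theta(G_N^M) < \prod_{n=N}^{M} (1-c/\varphi(n))$. Letting $M\to\infty$ and using $0 < c/\varphi(n) \le c/m < 1$ together with the standard fact that $\prod_n(1-x_n)=0$ whenever $\sum_n x_n=\infty$, I conclude $\lambda_\theta\big(\bigcap_{n\ge N}\{a_n\le\varphi(n)\}\big)=0$ for every $N$. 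As $A_\varphi^c$ is the countable union of these null sets, $\lambda_\theta(A_\varphi)=1$.

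I expect the main obstacle to be making the chaining argument in the divergence case fully rigorous: one must phrase (\ref{2.4}) as a genuine conditional-probability bound uniform over all cylinders of the relevant rank, verify that the factors $c/\varphi(n)$ stay strictly below $1$ so the infinite product is meaningful and vanishes exactly when $\sum_n 1/\varphi(n)=\infty$, and carefully dispose of the edge cases where $\varphi(n)$ is small. The convergence half, by contrast, is routine once Lemma \ref{lema2.1} and Lemma \ref{lema2.2} are in hand.
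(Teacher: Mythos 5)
Your proposal is correct and follows essentially the same route as the paper's proof: the convergence half via Lemma \ref{lema2.1} and Borel--Cantelli, and the divergence half by showing $\lambda_\theta\bigl(\bigcap_{n\ge N}\{a_n\le\varphi(n)\}\bigr)=0$ through the cylinder-by-cylinder conditional bound from (\ref{2.4}) and the resulting vanishing infinite product (the paper phrases this with $B_{j,\ell}$ and the bound $1-x\le\exp(-x)$, which is your product argument in different notation). Your explicit handling of the edge case $\varphi(n)<m$ is a minor tidiness the paper leaves implicit.
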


\begin{proof}
Define $A_n = \{x \in \Omega: b_n(x) > \varphi(n)\}$, so that $A_{\varphi} = \limsup_{n \to \infty} A_n = \bigcap_{j \geq 1} \bigcup_{n \geq j} A_n$. 
By Lemma \ref{lema2.1}, we obtain
\[
\lambda_{\vartheta}(A_n) = \lambda_{\vartheta}\left( \bigcup_{b_1,\ldots,b_{n-1} \geq m} \bigcup_{k>\varphi(n)} C\left(b_1, \ldots, b_{n-1}, k\right)\right)
< \sum_{k \geq \lfloor \varphi(n) \rfloor+1} \frac{m+1}{k^2} < \frac{m+1}{\lfloor \varphi(n) \rfloor} < \frac{2(m+1)}{\varphi(n)}.
\]
By the Borel-Cantelli lemma, it follows that $\lambda_{\vartheta} (A_{\varphi}) = 0$ when $\sum_{n \geq 1} \frac{1}{\varphi(n)} < \infty$.
Now, suppose $\sum_{n \geq 1} \frac{1}{\varphi(n)} = \infty$. We observe that
\[
\lambda_{\vartheta} (A_{\varphi}) = \lambda_{\vartheta} \left( \bigcap_{j\geq 1} \bigcup_{n \geq j} A_n\right)  =1 \iff 
\lambda_{\vartheta} (A^c_{\varphi}) = \lambda_{\vartheta} \left( \bigcup_{j\geq 1} \bigcap_{n \geq j} A^c_n\right) = 0.
\]
Thus, it suffices to prove $\lambda_{\vartheta} \left(\bigcap_{n \geq j} A^c_n \right)=0$, where
$A^c_n = \{x \in \Omega: b_n(x) \leq \varphi (n) \}$.
Define
\[
B_{j,\ell} = \bigcap_{j<n \leq j+\ell} A^c_n.
\]
Then,
\[
\lambda_{\vartheta}\left(\bigcap_{n \geq j+1} A^c_n \right) = \lim_{\ell \to \infty} \lambda_{\vartheta} (B_{j,\ell}).
\]
By the definition of $B_{j, \ell}$, we have
\[
B_{j,1} = \bigcup_{b_1,\ldots,b_{j} \geq m} \bigcup_{m \leq k \leq \varphi(j+1)} C\left(b_1, \ldots, b_{j}, k\right).
\]
Applying Lemma \ref{lema2.1}, we obtain
\[
\sum_{k \geq \lfloor \varphi(j+1)\rfloor+1} \frac{\lambda_{\vartheta}\left(C\left(b_1, \ldots, b_j, k\right)\right)}{\lambda_{\vartheta}\left(C\left(b_1, \ldots, b_j\right)\right)} > 
\sum_{k \geq \lfloor \varphi(j+1)\rfloor+1} \frac{1}{6k^2} > \frac{1}{6(\varphi(j+1)+1)}.
\]
Thus,
\[
\lambda_{\vartheta} (B_{j,1}) \leq 1 - \frac{1}{6(\varphi(j+1)+1)}.
\]
Since
\[
B_{j, \ell+1} = B_{j, \ell} \cap \{x \in \Omega: b_{j+\ell+1} \leq \varphi (j+\ell+1)\},
\]
we obtain by induction,
\[
\lambda_{\vartheta} (B_{j,\ell}) \leq \prod^{\ell}_{i=1} \left( 1 - \frac{1}{12\varphi(j+i)}\right) 
\leq \exp\left(-\sum_{i=1}^{\ell} \frac{1}{12\varphi(j+i)}\right).
\]
Since $\sum_{n \geq 1} \frac{1}{\varphi(n)} = \infty$, we conclude that $\lim_{\ell \to \infty} \lambda_{\vartheta} (B_{j,\ell})=0$. 
Therefore, $\lambda_{\vartheta} (A^c_{\varphi}) = 0$, completing the proof.
\end{proof}

\begin{corollary} \label{cor.3.2}
For $\lambda_{\vartheta}$-almost every $x \in [0, \vartheta]$, it holds that 
\[
b_n(x) > n \log n \quad \text{for infinitely many} \quad n \in \mathbb{N}_+,
\]
while for each $\varepsilon > 0$, we have 
\[
b_n(x) < n (\log n)^{1 + \varepsilon} \quad \text{for all sufficiently large} \quad n \in \mathbb{N}_+.
\]
\end{corollary}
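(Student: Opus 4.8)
The plan is to deduce both assertions directly from Theorem~\ref{th.B-B} by feeding in two explicit rate functions and invoking the classical convergence behaviour of logarithmic (Bertrand) series; no new machinery beyond the $0$--$1$ dichotomy already established is required.

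For the first assertion I would set $\varphi(n):=n\log n$ for $n\ge 2$, assigning $\varphi(1)$ any positive value (this affects neither the $\limsup$ event nor the tail of the series). By the integral test, $\int_2^{\infty}\frac{\mathrm{d}t}{t\log t}=\lim_{t\to\infty}\bigl(\log\log t-\log\log 2\bigr)=+\infty$, so $\sum_{n\ge 2}\frac{1}{n\log n}=\infty$. The divergence branch of Theorem~\ref{th.B-B} then gives $\lambda_{\theta}(A_{\varphi})=1$, which is exactly the statement that $a_n(x)>n\log n$ for infinitely many $n$ for $\lambda_{\theta}$-a.e.\ $x$.

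For the second assertion I would fix $\varepsilon>0$ and put $\psi(n):=n(\log n)^{1+\varepsilon/2}$ for $n\ge 2$. With the substitution $u=\log t$ one computes $\int_2^{\infty}\frac{\mathrm{d}t}{t(\log t)^{1+\varepsilon/2}}=\frac{2}{\varepsilon}(\log 2)^{-\varepsilon/2}<\infty$, so $\sum_{n\ge 2}\frac{1}{\psi(n)}<\infty$. The convergence branch of Theorem~\ref{th.B-B} now yields $\lambda_{\theta}(A_{\psi})=0$, hence the complement $A_{\psi}^{c}$ has full measure. For every $x\in A_{\psi}^{c}$ one has $a_n(x)\le \psi(n)=n(\log n)^{1+\varepsilon/2}$ for all sufficiently large $n$, and since $(\log n)^{1+\varepsilon/2}<(\log n)^{1+\varepsilon}$ once $\log n>1$, this upgrades to the desired strict bound $a_n(x)<n(\log n)^{1+\varepsilon}$ for all sufficiently large $n$. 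As $\varepsilon>0$ was arbitrary, the claim follows. (Using the exponent $\varepsilon/2$ rather than $\varepsilon$ is merely a convenient device to pass from the weak inequality delivered by $A_{\psi}^{c}$ to the strict inequality in the statement.)

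I do not expect any genuine obstacle here: the substantive content — the two-sided cylinder estimate \eqref{2.4} and the resulting $0$--$1$ law — is already packaged inside Theorem~\ref{th.B-B}, so all that remains is the convergence analysis of the two logarithmic series. The only point demanding any care is the correct alignment of exponents, namely that the borderline series $\sum 1/(n\log n)$ diverges while $\sum 1/(n(\log n)^{1+\varepsilon})$ converges for every $\varepsilon>0$, which is precisely what makes the threshold $n\log n$ versus $n(\log n)^{1+\varepsilon}$ sharp.
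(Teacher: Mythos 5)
Your proposal is correct and follows essentially the same route as the paper: both apply the two branches of Theorem~\ref{th.B-B} to the test functions $n\log n$ and $n(\log n)^{1+\varepsilon}$ and check divergence/convergence of the corresponding Bertrand series. The extra care you take (defining $\varphi(1)$, and using the exponent $1+\varepsilon/2$ to turn the weak bound into the strict inequality claimed) only tidies up details the paper passes over silently.
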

\begin{proof}
This result follows directly from Theorem \ref{th.B-B} based on the following observation 
\[
\sum_{n \geq 1} \frac{1}{n \log n} = \infty \quad \text{and} \quad \sum_{n \geq 1} \frac{1}{n (\log n)^{1 + \varepsilon}} < \infty
\]
for all $\varepsilon > 0$.
\end{proof}

\section{The asymptotic behavior of the largest digit in $\vartheta$-expansions}

In the following, we will rely on the fundamental principles of the metric theory of $\vartheta$-expansions. One such principle is that the stochastic process derived from the digits of the $\vartheta$-expansion exhibits the $\psi$-mixing property.

\begin{definition}[$\psi$-mixing] \label{def.4.1}
Let $(X, \mathcal{X}, \mu)$ be a probability space, and let $\xi_j : X \to \mathbb{R}$ be a stationary sequence of random variables. 
For any $k \in \mathbb{N}_+$, let $\mathcal{B}_1^k = \sigma(\xi_1, \dots, \xi_k)$ and 
$\mathcal{B}_k^\infty = \sigma(\xi_k, \xi_{k+1}, \dots)$ denote the $\sigma$-algebras generated by the random variables $\xi_1, \dots, \xi_k$ and $\xi_k, \xi_{k+1}, \dots$, respectively.
The sequence $\{\xi_j\}$ is said to be \textit{$\psi$-mixing} if for any sets $A \in \mathcal{B}_1^k$ and $B \in \mathcal{B}_{k+n}^\infty$, the following inequality holds:
\[
\left| \mu(A \cap B) - \mu(A) \mu(B) \right| \leq \psi(n) \mu(A) \mu(B),
\]
where $\psi : \mathbb{N}_+ \to \mathbb{R}$ is a function satisfying $\psi(n) \to 0$ as $n \to \infty$.
\end{definition}
The random variables $b_n(x)$, for $n \in \mathbb{N}_+$, form a stationary sequence due to the invariance of the measure $\gamma_{\vartheta}$ with respect to the transformation $T_{\vartheta}$.

\begin{lemma} \label{lema4.2}
For all \( n \in \mathbb{N}_+ \) and \( w \in \mathbb{N}_m \),
\[
\gamma_{\vartheta}(b_n(x) \geq w) = \frac{1}{\log(1+\vartheta^2)} \log\left( 1 + \frac{1}{w} \right) =: p_{\vartheta}(w).
\]
\end{lemma}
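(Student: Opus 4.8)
The plan is to exploit the strict stationarity of $(a_n)_{n \in \mathbb{N}_+}$ under $\gamma_{\theta}$ to reduce the statement to the case $n = 1$, and then to evaluate a single integral of the invariant density over an explicit interval.

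First I would invoke stationarity. Since $\gamma_{\theta}$ is invariant under $T_{\theta}$ and $a_n = a_1 \circ T_{\theta}^{\,n-1}$, the sequence $(a_n)_{n \in \mathbb{N}_+}$ is strictly stationary, so that $\gamma_{\theta}(a_n(x) \geq w) = \gamma_{\theta}(a_1(x) \geq w)$ for every $n \in \mathbb{N}_+$. It therefore suffices to compute $\gamma_{\theta}(a_1(x) \geq w)$, which removes the dependence on $n$ entirely.

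Next I would describe the event $\{a_1 \geq w\}$ geometrically. By (\ref{2.01}), for each $i \in \mathbb{N}_m$ the cylinder $C(i) = \{a_1 = i\}$ equals the interval $\left(\frac{1}{(i+1)\theta}, \frac{1}{i\theta}\right)$. These intervals are pairwise disjoint, and the left endpoint of $C(i)$ coincides with the right endpoint of $C(i+1)$, so the union telescopes:
\[
\{x \in \Omega : a_1(x) \geq w\} = \bigcup_{i \geq w} C(i) = \left(0, \frac{1}{w\theta}\right),
\]
up to the countable, hence $\gamma_{\theta}$-null, set of shared rational endpoints. Here I would record that $w \geq m$ together with $\theta^2 = 1/m$ yields $\frac{1}{w\theta} \leq \frac{1}{m\theta} = \theta$, so the interval indeed lies inside $[0,\theta]$ (with equality to the full interval exactly when $w = m$, consistent with $a_1 \geq m$ always holding).

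Finally I would integrate the density from (\ref{1.3}) over this interval:
\[
\gamma_{\theta}(a_1(x) \geq w) = \frac{1}{\log\left(1+\theta^2\right)} \int_0^{1/(w\theta)} \frac{\theta \,\mathrm{d}x}{1 + \theta x} = \frac{1}{\log\left(1+\theta^2\right)} \log\left(1 + \frac{1}{w}\right),
\]
which is exactly $p_{\theta}(w)$. The computation is elementary; the only point demanding a little care is the identification of the telescoping union of cylinders with the single interval $\left(0, \frac{1}{w\theta}\right)$ and the verification that its endpoints, being rational, carry no $\gamma_{\theta}$-mass. I do not anticipate any genuine obstacle beyond this bookkeeping.
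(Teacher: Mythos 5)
Your proof is correct and takes essentially the same route as the paper: stationarity reduces the claim to $n=1$, and the probability is then computed by integrating the invariant density from (\ref{1.3}) over the cylinders given by (\ref{2.01}). The only cosmetic difference is that you telescope the union of cylinders into the single interval $\left(0, \frac{1}{w\theta}\right)$ and evaluate one integral, whereas the paper integrates over each cylinder $C(k)$, $k \geq w$, separately and telescopes the resulting sum of logarithms; the two computations are identical in substance.
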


\begin{proof}
Using (\ref{2.01}) and the fact that \( (b_n)_{n \in \mathbb{N}_+} \) is a strictly stationary sequence, as the transformation \( T_{\vartheta} \) is measure-preserving with respect to \( \gamma_{\vartheta} \), we have
\[
\gamma_{\vartheta}(b_n(x) \geq w) = \gamma_{\vartheta}(b_1(x) \geq w) = \frac{1}{\log(1+\vartheta^2)} \sum_{k \geq w} \int_{\frac{1}{k \vartheta}}^{\frac{1}{(k+1) \vartheta}} \frac{\vartheta \, \mathrm{d}x}{1+\vartheta x}.
\]
This simplifies to
\[
\frac{1}{\log(1+\vartheta^2)} \sum_{k \geq w} \left( \log\left( 1 + \frac{1}{k} \right) - \log\left( 1 + \frac{1}{k+1} \right) \right),
\]
which results in
\[
\frac{1}{\log(1+\vartheta^2)} \log\left( 1 + \frac{1}{w} \right).
\]
\end{proof}

\begin{lemma} \label{lema4.2a}
Let \( \{ f_{\vartheta,n}(x) \}_{n \geq 1} \) be a sequence of functions \( f_{\vartheta,n} \in C^2[0, \vartheta] \) defined recursively by
\[
f_{\vartheta,n+1}(x) = \sum_{i \geq m} \left( f_{\vartheta,n} \left( \frac{1}{\vartheta i} \right) - f_{\vartheta,n} \left( \frac{1}{x + \vartheta i} \right) \right), \quad n \in \mathbb{N},
\]
with \( f_{\vartheta, 0}(0) = 0 \) and \( f_{\vartheta, 0}(\vartheta) = 1 \).

Define
\begin{equation} 
g_{\vartheta, n}(x) = (\vartheta x + 1) f'_{\vartheta, n}(x), \quad x \in [0, \vartheta]. \label{4.001}
\end{equation}
Then
\begin{equation} \label{4.002}
\left\| g'_{\vartheta, n} \right\| \leq q^n_{\vartheta} \cdot \left\| g'_{\vartheta, 0} \right\|, \quad n \in \mathbb{N}_+,
\end{equation}
where
\begin{equation} \label{4.003}
q_{\vartheta} := m \left( \sum_{i \geq m} \left( \frac{m}{i^3(i+1)} + \frac{i+1-m}{i(i+1)^3} \right) \right) < 1.
\end{equation}
Here, \( \| \cdot \| \) denotes the supremum norm.
\end{lemma}

\begin{proof}
Since
\[
g_{\vartheta,n+1}(x) = \sum_{i \geq m} P_{\vartheta,i}(x) g_{\vartheta,n}\left( u_{\vartheta,i}(x) \right),
\]
where
\[
P_{\vartheta,i}(x) := \frac{\vartheta x + 1}{(x + \vartheta i)(x + \vartheta (i+1))} = \frac{1}{\vartheta} \left[ \frac{1 - \vartheta^2 i}{x + \vartheta i} - \frac{1 - \vartheta^2 (i+1)}{x + \vartheta (i+1)} \right]
\]
and
\[
u_{\vartheta,i}(x) := \frac{1}{x + \vartheta i},
\]
we have
\[
g'_{\vartheta,n+1}(x) = \sum_{i \geq m} \frac{1 - \vartheta^2 (i+1)}{(x + \vartheta i)(x + \vartheta (i+1))^3} f'_{\vartheta,n}(\alpha_{\vartheta,i}) - \sum_{i \geq m} \frac{P_{\vartheta,i}(x)}{(x + \vartheta i)^2} f'_{\vartheta,n}(u_{\vartheta,i}(x)),
\]
where \( u_{\vartheta,i+1}(x) < \alpha_{\vartheta,i} < u_{\vartheta,i}(x) \).
Thus,
\[
\left\| g'_{\vartheta, n+1} \right\| \leq \left\| g'_{\vartheta, n} \right\| \cdot \max_{x \in [0, \vartheta]} \left( \sum_{i \geq m} \frac{\vartheta^2 (i+1) - 1}{(x + \vartheta i)(x + \vartheta (i+1))^3} + \sum_{i \geq m} \frac{P_{\vartheta,i}(x)}{(x + \vartheta i)^2} \right).
\]
Using the inequalities
\[
\frac{\vartheta^2 (i+1) - 1}{(x + \vartheta i)(x + \vartheta (i+1))^3} \leq m^2 \frac{\vartheta^2 (i+1) - 1}{i(i+1)^3}
\]
and
\[
\sum_{i \geq m} \frac{P_{\vartheta,i}(x)}{(x + \vartheta i)^2} \leq m^2 \sum_{i \geq m} \frac{1}{i^3(i+1)},
\]
we obtain (\ref{4.002}) and (\ref{4.003}).
\end{proof}

The random variables \( b_n(x) \), \( n \in \mathbb{N}_+ \), are not independent. However, they satisfy a \( \psi \)-mixing condition. In fact, the sequence \( \{b_n\}_{n \in \mathbb{N}_+} \) is \( \psi \)-mixing under \( \gamma_{\vartheta} \), and the function \( \psi \) decays at an exponential rate:

\begin{lemma} \label{lema4.3}
For any sets \( A \in \mathcal{B}_1^k = \sigma(b_1, \ldots, b_k) \) and \( B \in \mathcal{B}_{k+n}^{\infty} = \sigma(b_{k+n}, b_{k+n+1}, \ldots) \), we have
\begin{equation} \label{4.004} 
\left| \gamma_{\vartheta}(A \cap B) - \gamma_{\vartheta}(A) \gamma_{\vartheta}(B) \right| \leq K_{\vartheta} q^n_{\vartheta} \gamma_{\vartheta}(A) \gamma_{\vartheta}(B),
\end{equation}
where \( 0 < q_{\vartheta} < 1 \) and \( K_{\vartheta} \) is a positive constant.
\end{lemma}

\begin{proof}
Let \( C_k \) be the \( k \)-th cylinder with endpoints \( \frac{p_k}{q_k} \) and \( \frac{p_k + \vartheta p_{k-1}}{q_k + \vartheta q_{k-1}} \). Define
\[ f_{\vartheta, n}(x) = \gamma_{\vartheta} \left( T^{n+k}_{\vartheta}(\omega) < x \mid C_k \right) := \frac{\gamma_{\vartheta} \left( \left( T^{n+k}_{\vartheta}(\omega) < x \right) \cap C_k \right)}{\gamma_{\vartheta}(C_k)}, \]
which is the conditional distribution function of \( T^{n+k}_{\vartheta}(\omega) \) given \( C_k \).

It is evident that \( \left( \left( T^k_{\vartheta}(\omega) < x \right) \cap C_k \right) \) is an interval with endpoints \( \frac{p_k}{q_k} \) and \( \frac{p_k + x \vartheta p_{k-1}}{q_k + x \vartheta q_{k-1}} \). Thus, we obtain
\[
f_{\vartheta, 0}(x) = \frac{1}{\gamma_{\vartheta}(C_k)} \frac{(-1)^k}{\log\left(1 + \vartheta^2\right)} \left( \log\left( 1 + \vartheta \frac{p_k + x \vartheta p_{k-1}}{q_k + x \vartheta q_{k-1}} \right) - \log\left( 1 + \vartheta \frac{p_k}{q_k} \right) \right).
\]
If \( g_{\vartheta, n} \) is defined as in Lemma \ref{lema4.2a}, we define
\[
K_{\vartheta} := \sup_{x \in [0, \vartheta]} \left| g'_{\vartheta, 0}(x) \right| = \| g'_{\vartheta, 0} \|.
\]
Hence, by (\ref{4.001}) and (\ref{4.002}),
\begin{equation} \label{4.005}
\left| f'_{\vartheta, n}(x) - \frac{\vartheta}{(\vartheta x + 1)\log(1 + \vartheta^2)} \right| \leq \frac{\left| g_{\vartheta, n}(x) - g_{\vartheta, n}(0) \right|}{\vartheta x + 1} + \frac{\left| g_{\vartheta, n}(0) - \frac{\vartheta}{\log(1 + \vartheta^2)} \right|}{\vartheta x + 1}
\end{equation}
and
\[
\left| g_{\vartheta, n}(x) - g_{\vartheta, n}(0) \right| = \left| \int_0^x g'_{\vartheta, n}(t) \, \mathrm{d}t \right| \leq \| g'_{\vartheta, n} \| \cdot x \leq K_{\vartheta} q^n_{\vartheta} x.
\]
Also, for some \( 0 < v_{\vartheta} < 1 \),
\[
1 = f_{\vartheta, n}(\vartheta) = \int_0^{\vartheta} f'_{\vartheta, n}(t) \, \mathrm{d}t = \int_0^{\vartheta} \frac{g_{\vartheta,n}(t)}{\vartheta t + 1} \, \mathrm{d}t
\]
\[
= g_{\vartheta, n}(0) \frac{\log(1 + \vartheta^2)}{\vartheta} + \int_0^{\vartheta} \frac{g_{\vartheta, n}(t) - g_{\vartheta, n}(0)}{\vartheta t + 1} \, \mathrm{d}t
\]
\[
= g_{\vartheta, n}(0) \frac{\log(1 + \vartheta^2)}{\vartheta} + v_{\vartheta} K_{\vartheta} q^n_{\vartheta} \left( 1 - \frac{\log(1 + \vartheta^2)}{\vartheta^2} \right),
\]
so
\[
g_{\vartheta, n}(0) = \frac{\vartheta}{\log(1 + \vartheta^2)} + v_{\vartheta} \frac{K_{\vartheta} q^n_{\vartheta}}{\vartheta} \left( 1 - \frac{\vartheta^2}{\log(1 + \vartheta^2)} \right).
\]
Thus, from (\ref{4.005}),

\begin{eqnarray}
\left| f'_{\vartheta, n}(x) - \frac{\vartheta}{(\vartheta x + 1)\log(1 + \vartheta^2)} \right| &\leq& \frac{K_{\vartheta} q^n_{\vartheta}}{\vartheta x + 1} + \frac{K_{\vartheta} q^n_{\vartheta}}{\vartheta} \left( \frac{\vartheta^2}{\log(1 + \vartheta^2)} - 1 \right) \frac{1}{\vartheta x + 1} \nonumber \\ 
&<& \frac{K_{\vartheta} q^n_{\vartheta}}{\vartheta x + 1} \frac{\vartheta}{\log(1 + \vartheta^2)}. \label{4.006}
\end{eqnarray}

Integrating (\ref{4.006}) over \( F \), we obtain
\[
\left| \gamma_{\vartheta} \left( T^{-(n+k)}_{\vartheta}(F) \mid C_k \right) - \gamma_{\vartheta}(F) \right| \leq K_{\vartheta} q^n_{\vartheta} \gamma_{\vartheta}(F).
\]
Since each \( A \in \mathcal{B}_1^k \) is a countable union of disjoint \( C_k \), we obtain (\ref{4.004}), and thus the proof is complete.
\end{proof}
Define 
\[
L_N := \max_{1 \leq n \leq N} b_n(x), \quad x \in \Omega.
\]
In the sequel, we discuss the asymptotic behavior of the largest digit \( L_N \).

\begin{theorem} \label{Th.4.4}
For any \( y > 0 \), we have
\begin{equation} \label{4.0700}
\lim_{N \to \infty} \gamma_{\vartheta}\left( x \in \Omega: L_N(x) < \frac{N y}{\log(1+\vartheta^2)} \right) = \exp\left( -\frac{1}{y}\right).
\end{equation}
\end{theorem}

\begin{proof}
\textit{1st step.} Let
\[
A_n = \{x \in \Omega: b_n(x) \geq w\},
\]
which implies
\[
\bigcap_{n=1}^{N} A_n^C = \{x \in \Omega: L_N(x) < w\} =: B_N.
\]
Since \( B_N \) represents the event where none of the \( A_n \) occurs, the Poincaré identity gives
\begin{equation}
\gamma_{\vartheta} (B_N) = \sum_{k=0}^{N} (-1)^k S_k \label{4.007}
\end{equation}
with
\[
S_0 = 1, \quad S_k = \sum_{1 \leq n_1 < n_2 < \ldots < n_k \leq N} \gamma_{\vartheta} \left( A_{n_1} \cap \ldots \cap A_{n_k} \right).
\]
Thus, equation (\ref{4.007}) provides an expression for the distribution function of \( L_N \). By choosing \( w = \left\lfloor \frac{N y}{\log(1+\vartheta^2)} \right\rfloor \), we show that the tail \( \sum_{k \geq Z} S_k \), where \( Z \) is a sufficiently large but fixed value, can be made arbitrarily small.

By repeatedly applying equation (\ref{4.004}) and referring to Lemma \ref{lema4.2}, we obtain that
\begin{equation} \label{4.008} 
\gamma_{\vartheta} \left( A_{n_1} \cap \ldots \cap A_{n_k}\right) \leq (1+K_{\vartheta})^{k-1} \gamma_{\vartheta}(A_{n_1})\gamma_{\vartheta}(A_{n_2}) \cdots \gamma_{\vartheta}(A_{n_k}) < (1+K_{\vartheta})^k p_{\vartheta}^k(w).
\end{equation}
For sufficiently large values of \( N \), we obtain
\[
w = \left\lfloor \frac{N y}{\log(1+\vartheta^2)} \right\rfloor \geq \frac{1}{2} \frac{N y}{\log(1+\vartheta^2)},
\]
hence
\[
p_{\vartheta}(w) \leq \frac{1}{w \log(1+\vartheta^2)} \leq \frac{2}{Ny}.
\]
Therefore
\[
\sum_{k \geq Z} S_k < \sum_{k \geq Z} \frac{N!}{(N-k)!k!} (1+K_{\vartheta})^k p_{\vartheta}^k(w) \leq \sum_{k \geq Z} \frac{N!}{(N-k)!k!} N^{-k} \left( \frac{2(1+K_{\vartheta})}{y} \right)^k
\]
\begin{equation} \label{4.0800} 
\leq \sum_{k \geq Z} \frac{1}{k!} \left( \frac{2(1+K_{\vartheta})}{y} \right)^k < \frac{1}{Z!} \left( \frac{4K_{\vartheta}}{y} \right)^Z \exp\left( \frac{4K_{\vartheta}}{y} \right) \leq \varepsilon
\end{equation}
as the value of \( Z \) is increased sufficiently.

\textit{2nd step.} Let us split \( S_k \) into two terms when considering \( k < Z \):
\begin{equation} \label{4.009}
S_k = S_k^* + R_k,
\end{equation}
where \( S_k^* \) represents the sum over all \( n_1 < n_2 < \ldots < n_k \) with \( n_{i+1} - n_i \geq t \) (\( i \geq 1 \)), where \( t \) is a positive integer determined as follows. Let \( \eta > 0 \) be an arbitrary real number, and let \( t \) be the smallest integer \( n \) such that \( K_{\vartheta} q_{\vartheta}^n < \eta \). Next, we estimate \( S_k^* \). Using repeated applications of (\ref{4.004}) and another reference to Lemma \ref{lema4.2}, we find that for any term in \( S_k^* \),
\[
\gamma_{\vartheta} \left( A_{n_1} \cap \ldots \cap A_{n_k} \right) = p_{\vartheta}^k(w) \left( 1 + \mathcal{O}_k(\eta) \right), \quad n_i + t \leq n_{i+1}.
\]
Thus, we obtain
\begin{equation} \label{4.0010} 
S_k^* = \frac{(N-(t-1)(k-1))!}{(N-(t-1)(k-1)-k)!k!} p_{\vartheta}^k(w) \left( 1 + \mathcal{O}_k(\eta) \right).
\end{equation}
To estimate \( R_k \) in (\ref{4.009}), observe that the overall estimation (\ref{4.008}) applies to each of its individual terms, and the number of terms is
\[
\frac{N!}{(N-k)!k!} - \frac{(N-(t-1)(k-1))!}{(N-(t-1)(k-1)-k)!k!} = o(N^k).
\]
Hence, we have
\begin{equation} \label{4.0011} 
R_k = o(N^k p_{\vartheta}^k(w)).
\end{equation}
Considering that
\[
p_{\vartheta}(w) = p_{\vartheta} \left( \left\lfloor \frac{N y}{\log(1+\vartheta^2)} \right\rfloor \right) = \left( 1 + \mathcal{O}(N^{-1}) \right) \frac{1}{Ny},
\]
from (\ref{4.009}), (\ref{4.0010}), and (\ref{4.0011}), we deduce that
\begin{equation} \label{4.0130}
S_k = \left( 1 + \mathcal{O}_k(\eta) \right) \frac{y^{-k}}{k!} + o_N(1),
\end{equation}
where \( k \) is fixed, and \( o_N(1) \to 0 \) as \( N \to \infty \).

\textit{3rd step.} Finally, by (\ref{4.007}), (\ref{4.0800}), and (\ref{4.0130}), we establish that for any given positive integer \( Z \),
\[
\gamma_{\vartheta} \left( L_N < \left\lfloor \frac{N y}{\log(1+\vartheta^2)} \right\rfloor \right) = \sum_{k=0}^{Z-1} (-1)^k \left( 1 + \mathcal{O}_k(\eta) \right) \frac{y^{-k}}{k!} + o_N(1) + o_Z(1),
\]
where the last term approaches 0 as \( Z \to \infty \). Letting \( N \to \infty \), and then \( \eta \to 0 \), we deduce that for any positive integer \( Z \),
\[
\lim_{N \to \infty} \gamma_{\vartheta} \left( L_N < \left\lfloor \frac{N y}{\log(1+\vartheta^2)} \right\rfloor \right) = \sum_{k=0}^{Z-1} (-1)^k \frac{y^{-k}}{k!} + o_Z(1).
\]
Since the left-hand side remains independent of \( Z \), taking \( Z \to \infty \), we arrive at the limit relation (\ref{4.0700}), while considering that the argument \( w \) in \( \{L_N < w\} \) is an integer in \( \mathbb{N}_m \). Since
\[
\gamma_{\vartheta} \left( L_N < \left\lfloor \frac{N y}{\log(1+\vartheta^2)} \right\rfloor \right)
\leq
\gamma_{\vartheta} \left( L_N < \frac{N y}{\log(1+\vartheta^2)} \right)
\leq
\gamma_{\vartheta} \left( L_N < \left\lfloor \frac{N y}{\log(1+\vartheta^2)} \right\rfloor + 1 \right),
\]
the proof is complete.
\end{proof}

\begin{theorem} \label{Th.4.5}
For any $0<\delta <1$ and $y>0$, we have
\begin{equation} \label{4.11}
\gamma_{\vartheta}\left( x \in \Omega: L_N(x) < \frac{N y}{\log\left(1+\vartheta^2 \right) } \right) = \exp\left( -\frac{1}{y}\right) +\mathcal{O}\left( \exp\left( -(\log N)\right)^{\delta} \right),
\end{equation}
where the constant involved in $\mathcal{O}$ depends exclusively on $\delta$.
\end{theorem}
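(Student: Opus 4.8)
The strategy is to re-examine the proof of Theorem~\ref{Th.4.4} and promote each qualitative error bound into a quantitative one, and then to balance the three free parameters of that argument — the truncation level $Z$ of the Poincar\'e series (\ref{4.3}), the gap $t$ that separates consecutive indices in $S^*_k$, and the mixing tolerance $\eta$ — simultaneously as functions of $N$. Throughout I keep $w=\left\lfloor Ny/\log(1+\theta^2)\right\rfloor$, and I abbreviate by $C$ a constant depending only on the fixed $y$ and $\theta$ that is large enough to dominate both $1+K_\theta$ and $4K_\theta/y$; all implied constants may depend on $y,\theta$ and, ultimately, only on $\delta$.

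First I would sharpen the tail estimate (\ref{4.6}): by Stirling's formula the bound there is of order $C^{Z}/Z!$, and the tail $\sum_{k\ge Z}y^{-k}/k!$ of the limiting exponential series has the same order, so the truncation error is $\mathcal O(C^{Z}/Z!)$. For the main terms I would make (\ref{4.8}) and (\ref{4.9}) explicit. In a term of $S^*_k$ the $k-1$ successive applications of (\ref{4.1}) across gaps $\ge t$ each contribute a factor $1+\mathcal O(K_\theta q^{t}_\theta)=1+\mathcal O(\eta)$, so, provided $k\eta<1$, their product is $1+\mathcal O(k\eta)$; the decisive feature is that this dependence on $k$ is only \emph{linear}. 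Together with $p_\theta(w)=(1+\mathcal O(N^{-1}))/(Ny)$ from Lemma~\ref{lema4.2} and the combinatorial ratio $\binom{N-(t-1)(k-1)}{k}N^{-k}=\frac1{k!}\left(1+\mathcal O(tk^2/N)\right)$, this upgrades (\ref{4.8}) to $S^*_k=\frac{y^{-k}}{k!}\left(1+\mathcal O(k\eta+tk^2/N)\right)$. For the remainder I would count the discarded tuples, $\binom{N}{k}-\binom{N-(t-1)(k-1)}{k}=\mathcal O(tk^2/N)\binom{N}{k}$, and bound each by (\ref{4.4}) to obtain $R_k=\mathcal O\!\left(\frac{tk^2}{N}\,\frac{C^{k}}{k!}\right)$. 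Summing the alternating series and using that $\sum_k k\,y^{-k}/k!$ and $\sum_k k^2C^{k}/k!$ converge then yields the master estimate
\[
\gamma_\theta(L_N<w)=\exp\!\left(-\tfrac1y\right)+\mathcal O(\eta)+\mathcal O(t/N)+\mathcal O\!\left(C^{Z}/Z!\right).
\]

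It remains to balance the three errors. Since $t$ is the least integer with $K_\theta q^{t}_\theta<\eta$, we have $t=\mathcal O(\log(1/\eta))$. Choosing $\eta=\exp(-(\log N)^{\delta})$ makes the first term exactly the asserted rate; then $t=\mathcal O((\log N)^{\delta})$ gives $t/N=\mathcal O((\log N)^{\delta}/N)$, which is $o(\eta)$ precisely because $\delta<1$ forces $\log N-(\log N)^{\delta}\to\infty$ — this is the only place the hypothesis $\delta<1$ enters. Taking $Z=\left\lceil(\log N)^{\delta}\right\rceil$ makes $C^{Z}/Z!$ fall well below $\exp(-(\log N)^{\delta})$, since $\log(C^{Z}/Z!)\le-Z\log Z+\mathcal O(Z)$ is of order $-(\log N)^{\delta}\log\log N$. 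One checks that for all large $N$ the side conditions $k\eta<1$, $tk^2\ll N$ and $Z\le N$ hold uniformly over $k<Z$. Finally, the statement for the real argument follows from the monotone sandwiching already used at the close of Theorem~\ref{Th.4.4}: replacing $w$ by $w\pm1$ perturbs the effective $y$ by $\mathcal O(1/N)$, hence $\exp(-1/y)$ by $\mathcal O(1/N)=o(\exp(-(\log N)^{\delta}))$.

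The main obstacle is not any single inequality but maintaining uniformity in $k$ up to the growing cutoff $Z=Z(N)$ while $\eta$, $t$ and $Z$ remain coupled. Concretely, I must verify that the mixing-induced multiplicative error accumulated over the $k-1$ gaps grows only linearly in $k$, and that the combinatorial deviation of $\binom{N-(t-1)(k-1)}{k}$ from $\binom{N}{k}$ carries a $k$-dependence mild enough (namely $\mathcal O(tk^2/N)$) to survive summation against $y^{-k}/k!$. These two uniform bounds are exactly what permit the three-way balance $\eta\sim\exp(-(\log N)^{\delta})$, $t\sim(\log N)^{\delta}$, $Z\sim(\log N)^{\delta}$, and they are where the restriction $0<\delta<1$ becomes essential.
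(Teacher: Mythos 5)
Your overall strategy --- rerunning the proof of Theorem \ref{Th.4.4} with the truncation level $Z$, the gap $t$ and the mixing tolerance $\eta$ made into functions of $N$ --- is exactly the paper's strategy, and for a \emph{fixed} $y>0$ your balance $\eta\sim\exp(-(\log N)^{\delta})$, $t\sim(\log N)^{\delta}$, $Z\sim(\log N)^{\delta}$ does yield the stated error with a constant depending on $y$. But that is a genuine gap, not a cosmetic one: the theorem asserts that the constant in the $\mathcal{O}$ depends \emph{exclusively} on $\delta$, i.e.\ the estimate is uniform in $y>0$, and your proof does not deliver this. You concede it yourself when you let $C$ "depend on the fixed $y$": indeed $C\asymp 1/y$, the sums $\sum_k k\,y^{-k}/k!=e^{1/y}/y$ feeding your $\mathcal{O}(\eta)$ term, and every "for $N$ sufficiently large" threshold all blow up as $y\to 0$, so the final constant is $C(\delta,y)$, not $C(\delta)$. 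The uniformity is the substantive content of the theorem (it is what distinguishes it from Theorem \ref{Th.4.4} plus a rate for fixed $y$), and it is precisely what is used later: in the proof of Theorem \ref{th.5.2} this result is invoked with $y\asymp 1/\log\log N\to 0$, where a $y$-dependent constant would make the argument collapse.

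Moreover, your parameter choices cannot be patched to give uniformity; they fail structurally once $y$ is allowed to be as small as $(\log N)^{-\delta}$. In that regime $C\asymp 1/y\asymp(\log N)^{\delta}$, so with $Z=\lceil(\log N)^{\delta}\rceil$ Stirling gives $C^{Z}/Z!\asymp(Ce/Z)^{Z}\asymp e^{Z}=\exp\bigl((\log N)^{\delta}\bigr)$, which diverges instead of being an error term; likewise your mixing error becomes $\mathcal{O}\bigl(\eta\,e^{1/y}/y\bigr)=\mathcal{O}\bigl((\log N)^{\delta}\bigr)$ with your choice of $\eta$. The paper handles this by two ingredients you are missing: (i) it restricts to $y\geq(\log N)^{-\delta}$, so that $1/y$ may be replaced by $(\log N)^{\delta}$ throughout and all constants become functions of $\delta$ alone (the complementary range is trivial by monotonicity in $y$, since there both the probability and $e^{-1/y}$ are themselves $\mathcal{O}\bigl(\exp(-(\log N)^{\delta})\bigr)$); and (ii) it takes the much larger parameters $Z=\lfloor\log N/\log\log N\rfloor$ and $t=\lfloor(\log N)^{2}\rfloor$, for which $\bigl(4K_{\theta}/y\bigr)^{Z}\exp(4K_{\theta}/y)<N^{\delta'}$, $1/Z!\ll N^{-(1-\varepsilon-\zeta)}$ and the mixing error $K_{\theta}q_{\theta}^{(\log N)^{2}}$ are controlled uniformly over this whole range of $y$, giving in fact a polynomial bound $\mathcal{O}(N^{-a'})$ there, which is then weakened to $\exp(-(\log N)^{\delta})$ only to absorb the small-$y$ case. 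Without the case split at $y=(\log N)^{-\delta}$ and without parameter choices that survive $1/y$ as large as $(\log N)^{\delta}$, your argument proves a strictly weaker statement than the one claimed.
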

\begin{proof}
We follow the proof of Theorem \ref{Th.4.4} with a particular choice of $Z$ and $t$. 
We choose $Z = \left \lfloor \frac{\log N}{\log \log N}\right \rfloor$. 
For a specific $0<\delta <1$, we choose $\delta < \delta' < 1$, $\varepsilon >0$ and $\zeta > 0$ so that $1 - \delta' > \varepsilon + \zeta$. 
We assume that $y \geq (\log N)^{-\delta}$. 
Applying Stirling's formula, we derive
\[
\frac{1}{Z!} ~ \frac{1}{\sqrt{2\pi} \, Z^{Z+1/2} \exp(-Z)} \asymp \frac{\exp\left(\frac{\log N}{\log \log N} \right)}{\left( \frac{\log N}{\log \log N} \right)^{\frac{\log N}{\log \log N} +\frac{1}{2}}  }.
\]
For $N$ sufficiently large,
\[
e \leq \left( \frac{\log N}{\log \log N} \right)^{\zeta}.
\]
Thus, we obtain
\[
\exp\left(\frac{\log N}{\log \log N} \right) \leq \left( \left( \frac{\log N}{\log \log N} \right)^{\zeta} \right)^{\frac{\log N}{\log \log N}} < N^{\zeta}.
\]
Furthermore, for $N$ sufficiently large
\[
\left( \frac{\log N}{\log \log N} \right)^{\frac{\log N}{\log \log N}} > N^{1-\varepsilon}.
\]
As a result, we obtain
\begin{equation} \label{4.12}
\frac{1}{Z!} \ll \frac{1}{N^{1-\varepsilon-\zeta}}.
\end{equation}
Alternatively, it is obvious that
\[
\left(\frac{4K_{\vartheta}}{y}\right)^Z \exp\left(\frac{4K_{\vartheta}}{y}\right) \leq \left( 4K_{\vartheta}\left( \log N \right)^{\delta} \right)^{\frac{\log N}{\log \log N}} \exp \left( 4K_{\vartheta}\left( \log N \right)^{\delta} \right) < N^{\delta'}
\]
when $N$ is sufficiently large. 
Finally, using (\ref{4.0800}), we obtain
\begin{equation} \label{4.13}
\sum_{k \geq Z} S_k \ll N^{-a}
\end{equation}
for $0 < a < 1-\varepsilon - \zeta - \delta'$. 

Setting $t = \left \lfloor (\log N)^2\right\rfloor$, we estimate $R_k$ for $k<Z$
\begin{eqnarray*}
R_k &\leq& \left( \frac{N!}{(N-k)!k!} - \frac{(N-(t-1)(k-1))!}{(N-(t-1)(k-1)-k)!k!} \right) (1+K_{\vartheta})^k p^k_{\vartheta}(w) \\
&\ll& t \, Z\, N^{k-1} (2K_{\vartheta})^k p^k_{\vartheta}(w) \\
&\ll& (\log N)^3 \frac{1}{\log \log N} N^{k-1} (2K_{\vartheta})^k p^k_{\vartheta}(w) \\
&\ll& (\log N)^3 N^{k-1} \left( \frac{4K_{\vartheta}}{Ny}\right)^k.
\end{eqnarray*}
In cases where $\frac{4K_{\vartheta}}{y} > 1$, we proceed to evaluate, for $N$ sufficiently large,
\begin{equation} \label{4.14}
R_{k} \leq \frac{1}{N^{1-\varepsilon}}\left(\frac{4K_{\vartheta}}{y}\right)^Z 
\leq \frac{1}{N^{1-\varepsilon}} \left( 4K_{\vartheta} (\log N)^{\delta}\right)^{\frac{\log N}{\log \log N}} < N^{-a}
\end{equation}
for $0 < a < 1-\varepsilon - \delta$. 
When $\frac{4K_{\vartheta}}{y} < 1$, the estimation becomes relatively straightforward.
We can select the value of $a$ to be the same as that in equation (\ref{4.13}). 

As a result, the number of terms in $S^*_k$, $k < Z$, is given by  
\[
\frac{N!}{(N-k)!k!} + \mathcal{O}\left( N^{k-1}(\log N)^3\right).
\]
We have
\begin{eqnarray} \label{4.15}
S^*_k &=& \left( \frac{N!}{(N-k)!k!} + \mathcal{O}\left( N^{k-1}(\log N)^3\right) \right) \left( 1+ \mathcal{O}\left( N^{-1}\right)  \right)^k \left(Ny \right)^{-k} \left(1+\beta K_{\vartheta}q^{(\log N)^2}_{\vartheta} \right)^k \nonumber \\
&=& \frac{y^{-k}}{k!} + \mathcal{O}\left( N^{-a} \right).
\end{eqnarray}
where $|\beta| \leq 1$. 
Subsequently, using (\ref{4.14}) and (\ref{4.15}), we deduce that 
$S_k = \frac{y^{-k}}{k!} + \mathcal{O}\left( N^{-a} \right)$. 
In conclusion, using (\ref{4.13}), we obtain
\[
\gamma_{\vartheta} (B_N) = \sum_{k=0}^{Z-1} \left( (-1)^k \frac{y^{-k}}{k!} + \mathcal{O}\left( N^{-a} \right)  \right) + \mathcal{O}\left( N^{-a} \right) = 
\sum_{k=0}^{Z-1} (-1)^k \frac{y^{-k}}{k!} + \mathcal{O}\left( N^{-a'} \right) = \exp\left(-\frac{1}{y} \right) 
+ \mathcal{O}\left( N^{-a'} \right),
\] 
with $0 < a' < a$. 
\end{proof}

\section{Some iterated logarithm results}

We begin with the following quantitative Borel-Cantelli lemma.

\begin{lemma} \cite{Ph-1967} \label{lema5.1}
Let $\{E_N\}_{n \geq 1}$ be a sequence of measurable sets in a probability space $(X, \mathcal{X}, \mu)$. Denote by $A(N,x)$ the number of integers $n \leq N$ such that $x \in E_n$, i.e., 
\[
A(N,x) = \sum_{n \leq N} \chi_{E_n}(x),
\]
where $\chi_{E_n}$ is the characteristic function of $E_n$. Define
\[
\varphi(N) := \sum_{n \leq N} \mu(E_n).
\]
Suppose there exists a convergent series $\sum_{k \geq 1} c_k$ with $c_k \geq 0$ such that for all integers $n > \ell$, we have 
\begin{equation} \label{5.1}
\mu(E_n \cap E_{\ell}) \leq \mu(E_n) \mu(E_{\ell}) + \mu(E_n) c_{n-\ell}.
\end{equation}
Then for any $\varepsilon > 0$, 
\begin{equation} \label{5.2}
A(N,x) = \varphi(N) + \mathcal{O}\left(\varphi^{1/2}(N) \log^{3/2 + \varepsilon} \varphi(N)\right) \quad \mu\text{-a.s.}
\end{equation}
\end{lemma}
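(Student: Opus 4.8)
The plan is to reduce the almost-sure statement to a second-moment (variance) estimate for the counting function $A(N,\cdot)$ and then feed that estimate into a Gál--Koksma type strong law. Writing $f_n := \chi_{E_n} - \mu(E_n)$, so that $A(N,x) - \varphi(N) = \sum_{n \le N} f_n(x)$ is centered, the whole argument rests on controlling the $L^2$-norm of partial sums of the $f_n$ in terms of the increments of $\varphi$.

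First I would establish, for every pair $M < N$, the block estimate
\begin{equation*}
\int_X \Bigl( \sum_{M < n \le N} f_n \Bigr)^2 \, \mathrm{d}\mu \le (1 + 2C)\bigl(\varphi(N) - \varphi(M)\bigr), \qquad C := \sum_{k \ge 1} c_k < \infty .
\end{equation*}
This is routine: expanding the square and using $\chi_{E_n}^2 = \chi_{E_n}$ gives diagonal terms $\sum_{M<n\le N}\bigl(\mu(E_n)-\mu(E_n)^2\bigr) \le \varphi(N)-\varphi(M)$, while the off-diagonal terms equal $2\sum_{M<\ell<n\le N}\bigl(\mu(E_n\cap E_\ell)-\mu(E_n)\mu(E_\ell)\bigr)$, which by hypothesis (\ref{5.1}) are at most $2\sum_{M<n\le N}\mu(E_n)\sum_{k\ge 1}c_k = 2C\bigl(\varphi(N)-\varphi(M)\bigr)$. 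The point of keeping the general block $M<N$ rather than just $M=0$ is that the bound is then additive over disjoint ranges of indices, which is exactly the quasi-orthogonality input required by the next step.

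With the block bound in hand, I would invoke the Gál--Koksma strong law of large numbers. Because $\varphi$ is nondecreasing with increments $\mu(E_n)\in[0,1]$, a second-moment bound of the form $\|\sum_{M<n\le N} f_n\|_2^2 \ll \varphi(N)-\varphi(M)$ forces
\[
\sum_{n\le N} f_n = \mathcal{O}\bigl(\varphi^{1/2}(N)\,\log^{3/2+\varepsilon}\varphi(N)\bigr) \quad \mu\text{-a.s.},
\]
which is precisely (\ref{5.2}). Concretely, I would pass to a subsequence $(N_j)$ along which $\varphi(N_j)$ increases geometrically, control $\sum_{n\le N_j} f_n$ by Chebyshev's inequality together with the convergence part of Borel--Cantelli (Lemma \ref{lema2.2}), and then bound the oscillation $\max_{N_j < N \le N_{j+1}} \bigl|\sum_{N_j<n\le N} f_n\bigr|$ over each block by a dyadic maximal inequality derived from the block estimate.

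I expect the passage from the $L^2$ bound to the pointwise almost-sure estimate with the sharp logarithmic exponent $3/2+\varepsilon$ to be the main obstacle: the variance computation is elementary, but the maximal inequality over blocks together with the two-scale (subsequence plus interpolation) Borel--Cantelli argument that produces exactly the power $\log^{3/2+\varepsilon}$ is the delicate part, and it is the content of the cited result of Philipp \cite{Ph-1967}.
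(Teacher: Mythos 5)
The paper offers no proof of this lemma at all: it is quoted, with citation, from Philipp \cite{Ph-1967}, so there is no internal argument to compare yours against; the comparison has to be with the method of the cited source, and your outline matches it. Your variance computation is correct: the diagonal terms contribute at most $\varphi(N)-\varphi(M)$, and hypothesis (\ref{5.1}) bounds the ordered off-diagonal covariances by $\mu(E_n)c_{n-\ell}$, which after summing in $\ell$ gives the factor $2C$, so the block estimate $(1+2C)\left(\varphi(N)-\varphi(M)\right)$ holds for every pair $M<N$. The passage from such quasi-orthogonal block estimates to the almost-sure bound with the exponent $3/2+\varepsilon$ is precisely the G\'al--Koksma strong law (equivalently, Schmidt's lemma as presented in the metric number theory literature), and invoking it closes the proof; this is indeed how results of this type, including Philipp's, are established. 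Two remarks on your sketch of that second step: first, a geometric subsequence of $\varphi$-values plus monotonicity alone would not suffice, since the increment $\varphi(N_{j+1})-\varphi(N_j)\asymp\delta\varphi(N_j)$ dwarfs the error term $\varphi^{1/2}\log^{3/2+\varepsilon}\varphi$, so the Rademacher--Menshov-type dyadic maximal inequality you mention is genuinely indispensable, not an optional refinement; second, that maximal inequality costs a factor $\log^{2}$ in the variance, and Chebyshev plus Borel--Cantelli over the dyadic blocks of $\varphi$ then forces the exponent $1+\left(1/2+\varepsilon\right)=3/2+\varepsilon$, which is exactly where the stated power of the logarithm comes from. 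You correctly flag this as the delicate part and delegate it to the citable classical theorem, which is a legitimate way to complete the argument.
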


\begin{theorem} \label{th.5.2}
For almost every $x \in [0, \vartheta]$, we have
\[
\liminf_{N \to \infty} \frac{L_N(x) \log \log N}{N} = \frac{1}{\log \left( 1+ \vartheta^2\right)}.
\]
\end{theorem}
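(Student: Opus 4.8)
The plan is to establish the two inequalities $\liminf \ge \frac{1}{\log(1+\theta^2)}$ and $\liminf \le \frac{1}{\log(1+\theta^2)}$ separately. Write $c=\frac{1}{\log(1+\theta^2)}$ and $\psi(N)=\frac{N}{\log\log N}$, so the claim is $\liminf_N \frac{L_N(x)}{\psi(N)}=c$ a.e. The two structural facts I shall exploit are the monotonicity of $N\mapsto L_N(x)$ and the $\psi$-mixing estimate (\ref{4.1}). For the probabilities I rely on the sharp tail estimate behind Theorem \ref{Th.4.5}: whenever $w=\frac{Ny}{\log(1+\theta^2)}$ with $y\ge(\log N)^{-\delta}$ one has $\gamma_\theta(L_N<w)=\exp(-1/y)+\mathcal{O}(\exp(-(\log N)^{\delta}))$; since $p_\theta(w)\sim\frac{1}{w\log(1+\theta^2)}$ by Lemma \ref{lema4.2}, this reads $\gamma_\theta(L_N<w)=\exp(-(1+o(1))N p_\theta(w))$ in the relevant regime $w\asymp\psi(N)$, where $y\asymp 1/\log\log N$ lies comfortably above $(\log N)^{-\delta}$.

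\emph{Lower bound.} Fix $\varepsilon\in(0,c)$, set $w_N^-=(c-\varepsilon)\psi(N)$, and note $(c-\varepsilon)\log(1+\theta^2)=1-\varepsilon\log(1+\theta^2)=:\beta'\in(0,1)$. Along a subsequence $N_k$ the estimate above gives $\gamma_\theta(L_{N_k}<w_{N_{k+1}}^-)=\exp\!\big(-(1+o(1))\tfrac{N_k}{N_{k+1}}\tfrac{\log\log N_{k+1}}{\beta'}\big)$. Choosing $N_k=\lfloor\rho^k\rfloor$ with $\rho\in(1,1/\beta')$ makes the exponent behave like $-(\rho\beta')^{-1}\log\log N_{k+1}$ with $\rho\beta'<1$, so these probabilities are summable in $k$. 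The convergence Borel--Cantelli Lemma \ref{lema2.2} then yields $L_{N_k}(x)\ge w_{N_{k+1}}^-$ for all large $k$, a.e. Since $L$ is nondecreasing and $w_N^-$ is increasing, for $N_k\le N<N_{k+1}$ we get $L_N\ge L_{N_k}\ge w_{N_{k+1}}^-\ge w_N^-$; hence $L_N\ge(c-\varepsilon)\psi(N)$ eventually, and letting $\varepsilon\downarrow 0$ gives $\liminf\ge c$.

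\emph{Upper bound.} Fix $\varepsilon>0$, put $\beta=1+\varepsilon\log(1+\theta^2)>1$, and take the super-exponential blocks $B_k=(N_{k-1},N_k]$ with $N_k=\lfloor\exp((2+\varepsilon)k\log k)\rfloor$ and thresholds $w_k=(c+\varepsilon)\psi(N_k)$. For the block events $H_k=\{\max_{n\in B_k}a_n(x)<w_k\}$, stationarity together with the tail estimate (applied to $L_{|B_k|}$, with $|B_k|\sim N_k$) gives $\gamma_\theta(H_k)=(1+o(1))(\log N_k)^{-1/\beta}$; since $\log N_k\asymp k\log k$ and $1/\beta<1$ the series $\sum_k\gamma_\theta(H_k)$ diverges. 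The $H_k$ live on disjoint blocks, so (\ref{4.1}) yields $\gamma_\theta(H_k\cap H_\ell)\le\gamma_\theta(H_k)\gamma_\theta(H_\ell)+\gamma_\theta(H_k)\,K_\theta q_\theta^{\,N_{k-1}+1-N_\ell}$ for $k>\ell$, and $N_{k-1}+1-N_\ell\ge k-\ell$ makes the correction dominated by the summable sequence $c_{k-\ell}=K_\theta q_\theta^{\,k-\ell}$; thus the quantitative Borel--Cantelli Lemma \ref{lema5.1} applies and shows $H_k$ occurs for infinitely many $k$, a.e. To pass from block smallness to smallness of $L_{N_k}=\max(L_{N_{k-1}},\max_{B_k}a_n)$, I control the accumulated maximum by the elementary union bound $\gamma_\theta(L_{N_{k-1}}\ge w_k)\le N_{k-1}\,p_\theta(w_k)\asymp\frac{N_{k-1}}{N_k}\log\log N_k\asymp k^{-(2+\varepsilon)}\log k$, which is summable; by Lemma \ref{lema2.2} the bad events $\{L_{N_{k-1}}\ge w_k\}$ occur only finitely often. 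Combining, for infinitely many $k$ both $H_k$ holds and $L_{N_{k-1}}<w_k$, whence $L_{N_k}<w_k=(c+\varepsilon)\psi(N_k)$; this forces $\liminf_N\frac{L_N}{\psi(N)}\le c+\varepsilon$, and $\varepsilon\downarrow 0$ completes the proof.

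The main obstacle is the upper bound, and specifically the tension between two competing requirements on the block growth $N_k$: the series $\sum_k\gamma_\theta(H_k)$ must diverge, which forbids $N_k$ from growing too fast (so that $\log N_k$ stays small), while the accumulated-maximum bad events must be summable, which forces $N_k/N_{k-1}$ to grow fast enough to dwarf the previous record. The choice $N_k=\exp((2+\varepsilon)k\log k)$ threads this needle, but verifying both conditions at once---and in particular checking that the sharp constant $1/\beta$ in $\gamma_\theta(H_k)\approx(\log N_k)^{-1/\beta}$ survives the replacement of independence by $\psi$-mixing, uniformly as the auxiliary parameter $y\asymp1/\log\log N_k$ tends to $0$---is the delicate point where the precise error term of Theorem \ref{Th.4.5} is indispensable.
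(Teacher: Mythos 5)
Your proof is correct and follows essentially the same architecture as the paper's: fast-growing blocks handled by stationarity, the tail estimate of Theorem \ref{Th.4.5}, the $\psi$-mixing bound of Lemma \ref{lema4.3} feeding the quantitative Borel--Cantelli Lemma \ref{lema5.1} for the ``infinitely often'' half, a union bound via Lemma \ref{lema4.2} plus the convergence Borel--Cantelli lemma to kill the pre-block maximum, and a geometric subsequence with monotonicity for the reverse inequality. The only differences are cosmetic parameter choices (your $N_k=\lfloor\exp((2+\varepsilon)k\log k)\rfloor$ and $(c\pm\varepsilon)$-slack versus the paper's blocks $(k^{2k},k^{2k}+k^{2(k+1)}]$ with the exact constant, and $\rho^k$ versus $b^k$ with the $b^{-2}$ factor).
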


\begin{proof}
Since for all $A \in \mathcal{B}_{[0, \vartheta]}$, we have
\[
\frac{\lambda_{\vartheta}(A)}{\left( 1+ \vartheta^2\right)\log \left( 1+ \vartheta^2\right)} \leq \gamma_{\vartheta}(A) \leq \frac{\lambda_{\vartheta}(A)}{\log \left( 1+ \vartheta^2\right)},
\]
the measures $\gamma_{\vartheta}$ and $\lambda_{\vartheta}$ are equivalent. Therefore, we proceed to prove for all $x$ except a set of $\gamma_{\vartheta}$-measure $0$. Consider integers $M$ and $N$ with $M, N \geq 0$. Define
\[
L(M, N, x) := \max_{M < n \leq M + N} b_n(x),
\]
\[
\varphi(n) := \frac{n}{\log \log n \log\left( 1+ \vartheta^2\right)},
\]
and
\[
E_k := \left( x \in \Omega : L\left( k^{2k}, k^{2(k+1)}, x\right) \leq \varphi \left(k^{2(k+1)} \right) \right).
\]
Due to the $T_{\vartheta}$-invariance of $\gamma_{\vartheta}$, we can deduce from Theorem \ref{Th.4.5} that for any integer $k \geq k_0$, 
\begin{eqnarray} \label{5.3}
\gamma_{\vartheta}(E_k) &=& \gamma_{\vartheta} \left( x \in \Omega : L\left( k^{2k}, k^{2(k+1)}, x \right) \leq \varphi \left(k^{2(k+1)} \right) \right) \nonumber \\
&=& \gamma_{\vartheta} \left( x \in \Omega : L\left( 0, k^{2(k+1)}, x \right) \leq \varphi \left(k^{2(k+1)} \right) \right) \nonumber \\
&\geq& \frac{1}{2} \exp \left( -\log \log k^{2(k+1)}\right) \geq \frac{1}{8} (k \log k)^{-1}.
\end{eqnarray}
Clearly, $E_k$ depends only on $b_n(x)$ with $k^{2k} < n \leq k^{2(k+1)} + k^{2k}$. Consequently, using Lemma \ref{lema4.3}, for any pair of integers $k < \ell$, we have
\begin{equation*}
 \left| \gamma_{\vartheta}(E_k \cap E_{\ell}) - \gamma_{\vartheta}(E_k) \gamma_{\vartheta}(E_{\ell})\right| \leq K_{\vartheta}q^{\ell-k}_{\vartheta} \gamma_{\vartheta}(E_k) \gamma_{\vartheta}(E_{\ell}),
\end{equation*}
since $(k+1)^{2(k+1)} - k^{2(k+1)} - k^{2k} \geq 1$. 

From Lemma \ref{lema5.1}, we can conclude that $x \in E_k$ for infinitely many $k$ (almost surely), provided that $\varphi (N) \gg \log \log N$ according to (\ref{5.3}). By Lemma \ref{lema4.2}, we have
\begin{eqnarray*}
\gamma_{\vartheta}(F_k) &:=& \gamma_{\vartheta} \left( x \in \Omega : L\left( 0, k^{2k}, x \right) \geq \varphi \left(k^{2(k+1)} \right) \right) \\
&\leq& \sum_{n \leq k^{2k}} \gamma_{\vartheta} \left(x \in \Omega : b_n(x) \geq \varphi\left(k^{2(k+1)} \right)  \right) = k^{2k} p_{\vartheta} \left( \varphi \left( k^{2(k+1)}\right) \right)  \\
&\leq& k^{2k} \log \log k^{2(k+1)} \cdot k^{-2(k+1)} \leq k^{-3/2}.
\end{eqnarray*}
Hence, by Lemma \ref{lema5.1}, $x \in F_k$ only for finitely many $k$ (almost surely). Thus, 
\[
x \in E_k \setminus F_k = \left( x \in \Omega : L\left( 0, k^{2k} + k^{2(k+1)}, x\right) \leq \varphi \left(k^{2(k+1)} \right)  \right)
\]
for infinitely many $k$ (almost surely), which implies that 
\[
L\left( 0, k^{2(k+1)}, x \right) \leq \varphi \left(k^{2(k+1)} \right)
\]
holds for infinitely many $k$ (almost surely). Therefore, 
\begin{equation} \label{5.4}
\liminf_{N \to \infty} \frac{L_N(x) \log \log N}{N} \leq \frac{1}{\log \left( 1+ \vartheta^2\right)} \quad \text{a.e.}
\end{equation}
Now, we prove the converse inequality. Let $b > 1$. Again, by Theorem \ref{Th.4.5},
\begin{eqnarray*}
\gamma_{\vartheta}(G_k) &:=& \gamma_{\vartheta} \left( x \in \Omega : L\left( 0, \left\lfloor b^k \right\rfloor, x\right) \leq b^{-2} \varphi \left( \left\lfloor b^{k+1} \right\rfloor \right) \right) \\
&\ll& \exp \left( -b \log \log b^k \right) \ll k^{-b}.
\end{eqnarray*}
By Lemma \ref{lema5.1}, since $\sum k^{-b} < \infty$, it follows that $x \in G_k$ only for finitely many $k$ (almost surely). This means that
\[
L\left( 0, \left\lfloor b^k \right\rfloor, x \right) > b^{-2} \varphi \left( \left\lfloor b^{k+1} \right\rfloor \right)
\]
holds for all $k \geq k_0 (x,b)$. For a given value of $N$ such that $\left\lfloor b^k \right\rfloor \leq N < b^{k+1}$ where $k \geq k_0 (x,b)$, since
$L\left( 0, \left\lfloor b^k \right\rfloor, x \right) \leq L_N(x)$ 
and 
$\varphi (N) \leq \varphi \left( \left\lfloor b^{k+1} \right\rfloor \right)$, we conclude that 
\[
L_N(x) > b^{-2} \varphi(N) \quad \text{a.e. } x.
\]
Since this holds for any $b > 1$, we obtain 
\[
\liminf_{N \to \infty} \frac{L_N(x) \log \log N}{N} \geq \frac{1}{\log \left( 1+ \vartheta^2\right)} \quad \text{a.e.}
\]
By (\ref{5.4}), the proof is completed.
\end{proof}

There is no analogous result for Theorem \ref{th.5.2} with a finite nonzero superior limit. This follows from the following theorem.

\begin{theorem} \label{th.5.3}
Let $\{\varphi(n)\}_{n \geq 1}$ be a positive nondecreasing sequence. Then for a.e. $x \in [0, \vartheta]$, 
\begin{equation} \label{5.5}
L_N(x) > \varphi (N)
\end{equation}
has finitely many or infinitely many solutions in integers $N$ according as the series  
\begin{equation} \label{5.6}
\sum_{n \geq 1} \frac{1}{\varphi(n)}
\end{equation}
converges or diverges.
\end{theorem}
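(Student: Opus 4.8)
The plan is to deduce this theorem directly from the Borel--Bernstein-type result, Theorem~\ref{th.B-B}, by translating the statement about the running maximum $L_N$ into an equivalent statement about the individual digits $a_n$. Recall the set $A_\varphi = \{x \in \Omega : a_n(x) > \varphi(n) \text{ for infinitely many } n\}$ from Theorem~\ref{th.B-B}, whose $\lambda_\theta$-measure is $0$ or $1$ according as $\sum_{n} 1/\varphi(n)$ converges or diverges; since $\lambda_\theta$ and $\gamma_\theta$ are equivalent (by the two-sided bound already used in the proof of Theorem~\ref{th.5.2}), the phrase ``a.e.'' is unambiguous here. The elementary observation linking the two formulations is that $L_n(x) = \max_{1 \le k \le n} a_k(x) \ge a_n(x)$, so that $\{a_n(x) > \varphi(n)\} \subseteq \{L_n(x) > \varphi(n)\}$ for every $n$.

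For the divergent case I would argue as follows. If $\sum_{n} 1/\varphi(n) = \infty$, then Theorem~\ref{th.B-B} gives $\lambda_\theta(A_\varphi)=1$, so for a.e. $x$ there are infinitely many indices $n$ with $a_n(x) > \varphi(n)$. For each such $n$ the inclusion above yields $L_n(x) \ge a_n(x) > \varphi(n)$, so $N = n$ is a solution of $L_N(x) > \varphi(N)$. Hence $L_N(x) > \varphi(N)$ has infinitely many solutions for a.e. $x$, and this direction uses neither monotonicity of $\varphi$ nor any control of the initial digits.

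The convergent case is where the nondecreasing hypothesis enters, and it is the step I expect to require the most care. Suppose $\sum_{n} 1/\varphi(n) < \infty$; then $1/\varphi(n) \to 0$, so $\varphi(n) \to \infty$, and Theorem~\ref{th.B-B} gives $\lambda_\theta(A_\varphi)=0$, i.e. for a.e. $x$ there is an $n_0 = n_0(x)$ with $a_n(x) \le \varphi(n)$ for all $n \ge n_0$. The remaining obstacle is that the finitely many initial digits $a_1(x),\dots,a_{n_0-1}(x)$ may be large and need not be dominated by $\varphi$. To handle them I would set $M(x) = \max_{1 \le n < n_0} a_n(x)$, which is finite for a.e. $x$, and use $\varphi(n) \to \infty$ to choose $N_0 \ge n_0$ with $\varphi(N) \ge M(x)$ for all $N \ge N_0$. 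Then for $N \ge N_0$, splitting the maximum and using that $\varphi$ is nondecreasing so that $\max_{n_0 \le n \le N} a_n(x) \le \max_{n_0 \le n \le N}\varphi(n) = \varphi(N)$, one obtains $L_N(x) = \max(M(x),\, \max_{n_0 \le n \le N} a_n(x)) \le \varphi(N)$. Thus $L_N(x) > \varphi(N)$ can hold only for $N < N_0$, giving finitely many solutions for a.e. $x$ and completing the proof. In effect the theorem is a corollary of Theorem~\ref{th.B-B}, the only genuine work being the passage between $L_N$ and the digits via monotonicity in the convergent half.
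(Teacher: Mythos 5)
Your proposal is correct and follows essentially the same route as the paper: both deduce the theorem from the Borel--Bernstein-type result (Theorem~\ref{th.B-B}), using $L_N(x)\geq a_N(x)$ for the divergent case and, in the convergent case, the monotonicity of $\varphi$ together with $\varphi(n)\to\infty$ to absorb the finitely many initial digits. In fact, your write-up makes explicit the monotonicity argument that the paper's proof only asserts implicitly, so no gap remains.
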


\begin{proof}
Indeed, if $\sup \varphi(n) < \infty$, then the divergence of (\ref{5.5}) implies, according to Theorem \ref{th.B-B}, that $b_n(x) > \varphi(n)$ holds for infinitely many $n$ (a.e. $x$).

On the other hand, when $\varphi(n) \nearrow \infty$, the behavior of (\ref{5.5}) is determined by whether the inequality $b_n(x) > \varphi(n)$ holds finitely or infinitely often. This, in turn, leads to the conclusion that, by Theorem \ref{th.B-B}, this behavior holds for a.e. $x$ based on whether the series (\ref{5.6}) converges or diverges.
\end{proof}

\begin{corollary} \label{cor.5.4}
Let $\{\varphi(n)\}_{n \geq 1}$ be as in Theorem \ref{th.5.3}. Then for a.e. $x \in [0, \vartheta]$,
\begin{equation} \label{5.7}
\limsup_{N \to \infty} \frac{L_N(x)}{\varphi(N)}
\end{equation}
is either $0$ or $\infty$.
\end{corollary}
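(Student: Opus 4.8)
The plan is to deduce the dichotomy directly from Theorem~\ref{th.5.3} by applying it not to $\varphi$ itself but to every positive rescaling $c\,\varphi$. The crucial observation is that for any fixed $c > 0$ the sequence $\{c\,\varphi(n)\}_{n \geq 1}$ is again positive and nondecreasing, so Theorem~\ref{th.5.3} applies to it, and the associated series $\sum_{n} 1/(c\,\varphi(n)) = c^{-1}\sum_n 1/\varphi(n)$ converges or diverges exactly when (\ref{5.6}) does. Thus the convergence behaviour of (\ref{5.6}) governs, uniformly in $c$, whether $L_N(x) > c\,\varphi(N)$ has finitely or infinitely many solutions for a.e.\ $x$.

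First I would treat the convergent case $\sum_n 1/\varphi(n) < \infty$. For each fixed $c > 0$, Theorem~\ref{th.5.3} gives a null set off which $L_N(x) > c\,\varphi(N)$ holds only finitely often; equivalently, $L_N(x)/\varphi(N) \leq c$ for all sufficiently large $N$, so that $\limsup_{N \to \infty} L_N(x)/\varphi(N) \leq c$ a.e. To upgrade this to all $c$ simultaneously I would fix a sequence $c_j \downarrow 0$, discard the countable union of the corresponding null sets (still a null set), and conclude that $\limsup_{N \to \infty} L_N(x)/\varphi(N) = 0$ a.e.

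Next I would treat the divergent case $\sum_n 1/\varphi(n) = \infty$ symmetrically. For each fixed $c > 0$, Theorem~\ref{th.5.3} now yields that $L_N(x) > c\,\varphi(N)$ holds infinitely often a.e., whence $\limsup_{N \to \infty} L_N(x)/\varphi(N) \geq c$ a.e. Taking a sequence $c_j \uparrow \infty$ and again removing a countable union of null sets gives $\limsup_{N \to \infty} L_N(x)/\varphi(N) = \infty$ a.e. Since exactly one of the two cases always holds, the quantity in (\ref{5.7}) is a.e.\ equal to $0$ or to $\infty$, which is the assertion.

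The only point requiring care---rather than any genuine analytic difficulty---is the measure-theoretic bookkeeping: each application of Theorem~\ref{th.5.3} is valid only outside an exceptional null set depending on $c$, so one must pass through a fixed countable family of values $c_j$ (tending to $0$ or to $\infty$ respectively) and take the union of the exceptional sets in order to obtain a single null set off which the conclusion holds for the entire range of $c$. Monotonicity of $\limsup_{N \to \infty} L_N(x)/\varphi(N)$ in the threshold then transfers the bound from the discrete values $c_j$ to the desired limit.
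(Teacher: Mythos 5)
Your proof is correct. The paper takes a related but genuinely different route: instead of applying Theorem~\ref{th.5.3} countably many times to the constant rescalings $c_j\varphi$, it applies the theorem once to a \emph{sequence} rescaling $\varphi(N)/\alpha_N$, where $\{\alpha_n\}_{n\geq 1}$ is a monotone sequence tending to $\infty$ (convergent case) or to $0$ (divergent case), chosen so slowly that $\sum_n \alpha_n/\varphi(n)$ still converges (resp.\ diverges) --- an Abel--Dini type construction. Since $\alpha_N \to \infty$ (resp.\ $\alpha_N \to 0$), the conclusion that $L_N(x) > \varphi(N)/\alpha_N$ holds only finitely (resp.\ infinitely) often immediately forces the quantity in (\ref{5.7}) to be $0$ (resp.\ $\infty$), with no countable exhaustion over thresholds. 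What your version buys: the only input is the trivial fact that a constant multiple preserves positivity, monotonicity, and the convergence or divergence of $\sum_n 1/\varphi(n)$, so the hypotheses of Theorem~\ref{th.5.3} hold verbatim for $c\,\varphi$; by contrast, the paper's rescaled sequence $\varphi(n)/\alpha_n$ need not be nondecreasing (a hypothesis of Theorem~\ref{th.5.3} the paper does not verify), and the existence of a suitable $\{\alpha_n\}$ itself requires a small auxiliary argument. What the paper's version buys: a single application of Theorem~\ref{th.5.3} and no bookkeeping with countable unions of null sets --- bookkeeping which you, correctly, identify as the only delicate point of your argument and handle properly by passing to a countable family $c_j$ and using monotonicity of the limsup in the threshold.
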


\begin{proof}
We distinguish the cases where the series (\ref{5.6}) converges or diverges. If the series (\ref{5.6}) converges, we choose a monotone sequence $\{\alpha_n\}_{n \geq 1}$ tending to $\infty$ but so slowly that still $\sum_{n \geq 1} \frac{\alpha_n}{\varphi(n)} < \infty$. Therefore, according to Theorem \ref{th.5.3}, the inequality $L_N(x) > \frac{\varphi(N)}{\alpha_N}$ holds only for finitely many $N$ (a.e. $x$). Hence, (\ref{5.7}) vanishes for a.e. $x$. 

If the series (\ref{5.6}) diverges, we consider a monotone sequence $\{\alpha_n\}_{n \geq 1}$ tending to $0$ such that $\sum_{n \geq 1} \frac{\alpha_n}{\varphi(n)} = \infty$. Hence, $L_N(x) > \frac{\varphi(N)}{\alpha_N}$ holds for infinitely many $N$ (a.e. $x$), and thus (\ref{5.7}) is infinite for a.e. $x$.
\end{proof}




\begin{thebibliography}{[01]}
%
\bibitem{Bern} Bernstein, F., \textit{\"Uber eine Anwendung der Mengenlehre auf ein der Theorie der s\"akularen St\"orungen herr\"uhrendes}, Problem. Math. Ann. \textbf{71} (1912) 417-439.
%
\bibitem{BG-2000} Bhattacharya, R., Goswami, A., \textit{A class of random continued fractions with singular equillibria}, Perspectives in Statistical Sciences, eds A.K.Basu et al (2000), Oxford University Press.
%
\bibitem{Borel} Borel, E., \textit{Les probabilit\'es d\'enombrables et leurs applications arithm\'etiques}, Rend. Circ. Mat. Palermo \textbf{27} (1909) 247-271.
%
\bibitem{CR-2003} Chakraborty, S., Rao, B.V., \textit{$\theta$-expansions and the generalized Gauss map},
In Athreya, K., Majumdar, M., Puri, M., and Waymire, E. (eds.), "Probability, Statistics, and
Their Applications: Papers in Honor of Rabi Bhattacharya", Institute of Mathematical
Statistics, Lecture Notes-Monograph Series \textbf{41} (2003) 49-64.
%
\bibitem{CD-2004} Chakraborty, P.S., Dasgupta, A., \textit{Invariant measure and a limit theorem for some generalized Gauss maps}, J. Theoret. Probab. \textbf{17} (2) (2004) 387-401.
%
\bibitem{G-1972} Galambos, J., \textit{The distribution of the largest coefficient in continued fraction expansions}, Quart. J. Math. Oxford \textbf{23} (1972) 147-151.
%
\bibitem{G-1973} Galambos, J., \textit{The largest coefficient in continued fractions and related problems in Diophantine Approximation  and its Applications}, ed. by Charles Osgood, Academic Press, 1973.

\bibitem{G-1974} Galambos, J.,  \textit{An iterated logarithm type theorem for the largest coefficient in continued fractions}, ACTA ARITHMETICA, {\bf XXV} (1974) 359-364.
%
\bibitem{Ph-1967} Philipp, W., \textit{Some metrical theorems in number theory}, Pacific J. Math. \textbf{20} (1967) 109--127.
%
\bibitem{Ph-1976} Philipp, W., \textit{A conjecture of Erd\"os on continued fractions},
Acta Arith. \textbf{28} (1976) 379--386.
%
\bibitem{Sebe-2017} Sebe, G.I., \textit{A near-optimal solution to the Gauss-Kuzmin-L\'evy problem for $\theta$-expansions}, J. Number Theory \textbf{171} (2017) 43-55.
%
\bibitem{SL-2014} Sebe, G.I., Lascu, D., \textit{A Gauss-Kuzmin theorem and related questions for $\theta$-expansions}, Journal of Function Spaces, \textbf{2014} (2014) 12 pages.
    %
\bibitem{SL-2019} Sebe, G.I., Lascu, D., \textit{On convergence rate in the Gauss-Kuzmin problem for $\theta$-expansions}, J. Number Theory, \textbf{195} (2019) 51--71.
%

\bibitem{SL-2025} Sebe, G.I., Lascu, D., Selmi, B., \textit{The Hausdorff Dimension of the Sets of Irrationals with Prescribed Relative Growth Rates}, Journal of Geometric Analysis, \textbf{35} (2025) Article Number 33.
\end{thebibliography}
\end{document}